\documentclass[12pt,oneside,reqno]{amsart}

\usepackage[letterpaper,textwidth=6.9in,textheight=9.6in,centering]{geometry}
\usepackage{amsmath, amsfonts, amssymb}
\usepackage{mathrsfs}
\usepackage{latexsym}
\usepackage{bbm}
\usepackage{mathtools}
\usepackage{exscale}
\usepackage{cases}

\usepackage{enumitem}

\usepackage[pdfstartview=FitH,
            CJKbookmarks=true,
            bookmarksnumbered=true,
            bookmarksopen=true,
            colorlinks=true,
            linkcolor=blue,
            anchorcolor=blue,
            citecolor=red,
            urlcolor=blue
            ]{hyperref}

\numberwithin{equation}{section}
\newtheorem{theorem}{Theorem}[section]
\newtheorem{definition}[theorem]{Definition}
\newtheorem{lemma}[theorem]{Lemma}
\newtheorem{corollary}[theorem]{Corollary}

\newtheorem{proposition}[theorem]{Proposition}

\newtheorem{definition and theorem}[theorem]{Definition and Theorem}

\def\bl{\begin{lemma}}
\def\el{\end{lemma}}
\def\bc{\begin{corollary}}
\def\ec{\end{corollary}}
\def\bt{\begin{theorem}}
\def\et{\end{theorem}}

\def\bp{\begin{proposition}}
\def\ep{\end{proposition}}
\def\be{\begin{equation}}
\def\ee{\end{equation}}
\def\baa{\begin{align*}}
\def\eaa{\end{align*}}

\theoremstyle{definition}
 
\theoremstyle{remark}
\newtheorem{remark}[theorem]{Remark}

\DeclareMathOperator{\conv}{conv}
\DeclareMathOperator{\aff}{aff}
\DeclareMathOperator{\lin}{lin}

\newcommand{\R}{\mathbb{R}}

\newcommand{\VP}{\mathcal P}

\newcommand{\I}{\mathcal I}

\newcommand{\F}{\mathcal F}

\newcommand{\A}{\mathcal A}
\newcommand{\sn}{\mathbb S^2}
\newcommand{\intt}{\operatorname{int}}

\newcommand{\cP}{\mathcal{P}}

\newcommand{\BM}{\mathrm{BM}}
\newcommand{\Aff}{\operatorname{Aff}}

\newcommand{\p}{{\mkern1.37mu }}

\title[The Mahler Conjecture in  Three Dimensions]{\large The Mahler Conjecture in Three Dimensions}

\author{Shibing Chen}
\address{School of Mathematical Sciences,
University of Science and Technology of China,
Hefei, Anhui 230026, China}
\email{chenshib@ustc.edu.cn}

\author{Yuanyuan Li}
\address{Institute for Theoretical Sciences,
Westlake University, Hangzhou, 310030, China}
\email{lyyuan@westlake.edu.cn}

\author{Dongmeng Xi}
\address{Department of Mathematics,
Shanghai University,
Shanghai 200444, China}
\email{xi\_dongmeng@shu.edu.cn; dongmeng.xi.math@gmail.com}

\author{Zhe-Feng Xu}
\address{School of Mathematical Sciences,
University of Science and Technology of China,
Hefei, Anhui 230026, China}
\email{xzf1998@mail.ustc.edu.cn}
\date{\today}

\begin{document}

\begin{abstract}
The Mahler conjecture dates back to 1938. This paper solves the conjecture for general convex bodies in three dimensions by developing a  method called the shadow flow. The equality case is   characterized as well.  This method is also applied to give  a new proof of the  three-dimensional symmetric case, which was first proved  by Iriyeh--Shibata. 
\end{abstract}



\maketitle

\medskip

\section{Introduction}
Let $K\subset \R^n$ be a convex body containing the origin $o$ in its interior. The {\it polar body} of $K$ is defined by
\[ K^\circ=\{y\in\R^n: y\cdot x\leq 1\text{ for all }x\in K\}. \]
It is a basic duality that $(K^\circ)^\circ=K.$

The Blaschke-Santal\'o inequality, a classical affine
isoperimetric inequality dating back to the work of Blaschke \cite{B1917} in 1917 and Santal\'o \cite{Santalo1949} in 1949, can be stated as follows. If $K\subset \R^n$ is a convex body whose centroid is at the origin, then 
\[  |K|\, |K^\circ| \le \omega_n^2,\]
with equality if and only if $K$ is an ellipsoid. 
Here $\omega_n$ denotes the volume of the unit ball in $\R^n$. See the survey of Lutwak \cite{Lutwaksurvey} and the $L_p$-Blaschke-Santal\'o inequality by Lutwak--Zhang \cite{LutwakZhang1997}. 
A recent breakthrough is that B\"or\"oczky--Patsalos--Saroglou \cite{BPS-2026} proved Ball's conjecture, a strengthened Blaschke-Santal\'o inequality.

It is a longstanding problem to establish the sharp reverse Blaschke-Santal\'o inequality, originating in Mahler's works \cite{Mahler,Mahler1939} in 1938 and 1939. This problem is precisely the Mahler conjecture. 

\vskip 3pt
\noindent{\bf Mahler conjecture.} {\it If  $K \subset \R^n$ is a convex body  with $o\in \intt K$, then   
\[ |K|\, |K^\circ| \ge \frac{(n+1)^{n+1}}{(n!)^2},\]
with the sharp constant attained by simplices with centroid  at the origin.

When $K $ is origin-symmetric,   
\[ |K|\, |K^\circ|  \ge \frac{4^{n}}{n!},\]  
with the sharp constant attained by cubes.} 

\vskip 3pt  

In two dimensions, Mahler \cite{Mahler} originally established the above two inequalities, in both the general and the symmetric forms. In higher dimensions, the Mahler conjecture had remained open for more than seven decades. In 2020, Iriyeh--Shibata \cite{IS2020} solved the symmetric form in three dimensions, and fully characterized the equality cases. 
The general form was independently formulated as a conjecture,  notably by A. D. Aleksandrov \cite{ADAlex} in 1967.  

Let us recall some known progress on the Mahler conjecture and related studies. Based on Iriyeh--Shibata's proof of the three-dimensional symmetric case \cite{IS2020}, a streamlined proof was subsequently given by Fradelizi, Hubard, Meyer, Rold\'an-Pensado, and Zvavitch \cite{FHMRZ}.
Both proofs rely on ingenious algebraic-topological equipartition arguments. 
Iriyeh--Shibata \cite{IS2025} also obtained some partial results in the three-dimensional general form. In 1987, Bourgain--Milman \cite{BourgainMilman1987} established an asymptotic reverse Santal\'o inequality; Kuperberg \cite{Kuperberg2008} improved the constant by a different method, and Berndtsson \cite{Bnsn}
gave a streamlined complex-analytic proof of Kuperberg's improved estimate. The equality cases in the planar Mahler conjecture were characterized by Reisner \cite{Reisner1986} in the symmetric setting and by Meyer \cite{Meyer1991} in the general setting.
Reisner \cite{Reisner1986} also proved the zonoid case, and Gordon--Meyer--Reisner \cite{GMR1988} gave a simple new proof. Saint-Raymond \cite{SaintRaymond1981} proved the unconditional case, and Meyer \cite{Meyer1986} gave an influential new proof;  Barthe--Fradelizi \cite{Barfra2013} obtained further results for bodies with many hyperplane symmetries.  
See the works of Nazarov--Petrov--Ryabogin--Zvavitch \cite{NPRZ2010}, Kim--Reisner \cite{KimReisner2011}, and Kim \cite{Kim2014} for related strict local minimality results in the Banach--Mazur distance; see also  Reisner--Sch\"utt--Werner 
\cite{RSW2012} for curvature restrictions on possible minimizers. The Mahler conjecture  also has important connections with symplectic capacities and the slicing problem, due respectively to Artstein-Avidan, Karasev, and Ostrover \cite{MR3263026}, and Klartag \cite{klartag2018}.
For further background, see the survey \cite{FMZsurvey}. Related Mahler-type sharp affine inequalities with simplices as extremizers were obtained by Lutwak--Yang--Zhang \cite{LYZplms,LYZ2010}. Stability results for the volume product and related affine inequalities were obtained by B\"or\"oczky \cite{B2010}, B\"or\"oczky--Hug \cite{BH2010}, and B\"or\"oczky--Makai--Meyer--Reisner \cite{BMMR2013}. 

In 2006, Campi--Gronchi \cite{CG} and Meyer--Reisner \cite{MR} brought the method of shadow systems 
into the study of the Mahler conjecture. In the centrally symmetric setting, Campi--Gronchi proved a polar-convexity theorem along shadow systems and used it to give a new proof of the two-dimensional symmetric form. Meyer--Reisner established  a general  polar-convexity theorem together with a rigidity result, and applied them to prove the general form for polytopes in \(\R^n\) with at most \(n+3\) vertices. Fradelizi--Meyer--Zvavitch \cite{FMZ2012} further used this approach to prove the general form for three-dimensional bodies that are convex hulls of two planar convex bodies.

The present paper establishes the three-dimensional general form of the Mahler conjecture and also provides a new proof of the three-dimensional symmetric form. Our main result for the general form of the Mahler conjecture is the following.

\vskip 5pt
\noindent {\bf Main Result.} {\it Every convex body $K\subset\R^3$ with $o\in \intt K$ satisfies
\[
      |K|\,|K^\circ| \geq \frac{64}{9},
\]
with equality if and only if $K$ is a tetrahedron (three-dimensional simplex) whose centroid is at the origin.}
\vskip 5pt

We now discuss the deformation viewpoint behind the proof. A classical starting point for deforming convex bodies is Steiner symmetrization, introduced by Steiner \cite{Steiner1838} in 1838  and presented systematically by Blaschke \cite{Blaschke1916} in 1916. Steiner symmetrization has proved to be a powerful deformation method for proving affine isoperimetric inequalities whose extremizers are ellipsoids; see, e.g., the solution by E. Milman and Yehudayoff \cite{MilmanYehudayoff2023} of a conjecture of Lutwak, as well as other works on affine isoperimetric inequalities by Lutwak--Zhang, Lutwak--Yang--Zhang, and Xi--Zhao 
\cite{LutwakZhang1997,LYZ2000,XiZhao2022}. For symmetrization methods and related applications in convex and affine geometry, see Bianchi--Gardner--Gronchi \cite{BGG2017}, Haddad--Ludwig \cite{HLJDG,HLJEMS}, and Saroglou \cite{Saroglou2022}. 
However, \emph{reverse affine isoperimetric inequalities} like the Mahler conjecture, whose extremizers are the most singular bodies, are much more challenging; see, e.g., the landmark results of Ball's volume ratio inequalities \cite{Ball1989,Ball1991} and Zhang's projection inequality \cite{Zhang1991}; see also Gardner--Zhang \cite{GZ1998} for a new proof and a generalization of Zhang's inequality. 
A central challenge in studying these reverse inequalities is the lack of a general deformation method toward their expected singular extremizers, such as simplices, cubes, octahedra, or, more generally, Hanner polytopes.

Along this reverse direction, there are some known methods in two dimensions. Blaschke's 1917 work \cite{Blaschke1917Sylvester} on Sylvester's four-point problem used a deformation known as ``shaking down''; see Campi--Colesanti--Gronchi \cite{CCG1999} for a general version of Sylvester's problem. 
Blaschke's approach was used to prove a reverse planar random-simplex inequality whose singular extremizer is a triangle, while the smooth extremizer is an ellipse. This is an early example of a deformation method leading toward a singular body in the plane. Mahler's planar proof \cite{Mahler} in 1938 also belongs to this deformation direction.

A shadow system is a general deformation that generalizes the classical Steiner symmetrization. Its original name, introduced by Rogers--Shephard \cite{RogersShephard1958} in 1958, was ``linear parameter system''. Shephard \cite{Shephard1964} later used the name ``shadow system''. A key idea of our work is to introduce shadow systems with special structures that can deform arbitrary polytopes and allow us to identify the bodies with the simplest possible structure in $\R^3$: simplices in the general case, and affine images of the cube or the octahedron in the symmetric case. These special structures are called \emph{admissible speeds}. We call such a shadow system with admissible speeds a \emph{shadow flow}.

Basic properties of the shadow flow are established in Section \ref{sec:admissible} and  developed in Section \ref{sec:lem:local-exclusion}, see also  Remark \ref{rem:flow} for further discussion. 
Section \ref{sec:main6.1}, together with the counting argument in Section \ref{sec:combinatorial}, completes the proof of the Mahler conjecture for general bodies; Section \ref{sec:main6.2} characterizes the equality case.
Section \ref{sec:final} applies the shadow-flow method to give a new proof of the symmetric form.

\section{Basic notation and tools}\label{Sec:tools}

Throughout, $B^n$ denotes the unit ball in $\R^n$. By a polytope, we always mean a convex polytope with non-empty interiors. By $\aff A$, $\lin A$, $\conv A$, we mean the affine hull, linear span, and convex hull of a set $A\subset \R^n$. The dimension of a set is defined as the dimension of its affine hull. We shall also use the standard notions of vertices (or extreme points), faces, and facets of a convex set; see Schneider \cite{Schneider} and Gardner \cite{Gardner} for  
general references. 

For a convex body $K \subset \R^n$ and a point $z \in \operatorname{int} K$, define the {\it polar body} with center $z$ by
\[
    K^z=\{y\in\R^n:(y-z)\cdot(x-z)\leq 1\text{ for all }x\in K\}.
\]
Equivalently, $K^z=(K-z)^\circ+z$, where $(\cdot)^\circ$ is the usual polar with respect to the origin. The bipolar identity gives
\[
        (K^z)^z=K.
\]

Let $s(K)$ be the {\it  Santal\'o point} of $K$, namely the unique point of $\operatorname{int}K$ minimizing $z\mapsto |K^z|$; see \cite{Santalo1949}. In fact,    $z\mapsto |K^z|$ is convex on $\intt K$, and $|K^z| \to \infty$ as $z\to \partial K$. 
The  {\it volume product} (or {\it Mahler volume}) is defined by
\[
        \VP(K)=|K|\,|K^{s(K)}|,
\] 
where $K^{s(K)}$ is called the {\it Santal\'o polar} of $K$. The {\it centroid} of $K$ is given by
\[
c(K) :=\frac{1}{|K|}\int_K x \,dx.
\]
With our convention, the duality between the centroid and the Santal\'o point reads (see \cite{Santalo1949})
\begin{equation}\label{centr-santalo} c(K^{s(K)})=s(K)  \qquad {\rm and} \qquad c(K)=s(K^{c(K)}). \end{equation}
$\VP$ is invariant under affine isomorphisms; see, for example, \cite[Section 1]{FMZ2012}. It is well known that the Santal\'o point is dual to the centroid in the sense that $o$ is the Santal\'o point of $K$ if and only if the centroid of $K^\circ$ is $o$. Moreover,  the Santal\'o point of a simplex coincides with its 
centroid.
Therefore, the simplex value in three dimensions is 
\[
        \VP(\Delta_3)=\frac{(3+1)^{3+1}}{(3!)^2}=\frac{64}{9}.
\]
We record some basic properties of volume product and polar bodies.
\begin{lemma}\label{lem:polar-nonincreasing}
For every convex body $K\subset\R^n$,
\[
        \VP(K^{s(K)})\leq \VP(K).
\]
\end{lemma}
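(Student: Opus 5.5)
Set $z=s(K)$ and $L=K^{z}$. The plan is to compute $\VP(L)$ by testing the Santal\'o-point minimization of $L$ at the admissible point $z$. Since $z\in\intt K$, we have $K-z\ni o$ in its interior, so $(K-z)^\circ$ is a convex body with $o$ in its interior. Hence $z\in\intt L$, and $z$ is a legitimate center for polarity with respect to $L$.

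By definition, $s(L)$ minimizes $w\mapsto |L^w|$ over $w\in\intt L$. Therefore
\[
\VP(L)=|L|\,|L^{s(L)}|\le |L|\,|L^{z}|.
\]
By the bipolar identity $(K^z)^z=K$, we get $L^z=K$. Hence
\[
\VP(L)\le |K^{z}|\,|K|=|K|\,|K^{s(K)}|=\VP(K).
\]
This is exactly the claim.

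There is essentially no obstacle. The only points needing care are that $z$ lies in $\intt L$, so that $L^z$ is defined and the minimization applies, and that the bipolar identity holds for centers other than the origin. Both follow by translating by $z$ and using the standard facts for polarity about the origin: $K^z=(K-z)^\circ+z$, so $(K^z)^z=((K-z)^{\circ\circ})+z=K$.

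As an aside, the duality \eqref{centr-santalo} gives $s(L)=s(K^{s(K)})$. By \eqref{centr-santalo} applied appropriately this need not equal $z$, which is why only an inequality, rather than an equality, is obtained in general.
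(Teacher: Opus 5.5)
Your argument is correct and is exactly the paper's: bound $|L^{s(L)}|$ for $L=K^{s(K)}$ by $|L^{s(K)}|$, using the minimizing property of the Santal\'o point, and then apply the bipolar identity $(K^{s(K)})^{s(K)}=K$. Your check that $s(K)\in\intt L$ is a detail the paper leaves implicit, and your closing aside is unnecessary but harmless.
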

Indeed, if we denote $ K^*=K^{s(K)}$, then
$$\VP(K^{s(K)})=|K^*||(K^*)^{s(K^*)}|\leq |K^*||(K^{s(K)})^{s(K)}|=|K||K^*|=\VP(K).$$
 
 \begin{lemma}\cite[Theorem 1.3]{BayerLee1993}\label{lem:polarity-face-lattice}
Let $P\subset\R^n$ be a full-dimensional polytope and let $z\in\operatorname{int}P$. Then $P^z$ is a full-dimensional polytope,  and
\[
        V(P^z)=F(P),\qquad F(P^z)=V(P).
\]
\end{lemma}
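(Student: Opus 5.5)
The statement is cited from Bayer--Lee, and $V(\cdot)$, $F(\cdot)$ presumably denote the number of vertices and the number of facets. My plan is the standard duality argument. By translation invariance of everything in sight ($P^z=(P-z)^\circ+z$), I reduce to $z=o\in\intt P$ and work with the usual polar $P^\circ$.

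Write $P$ in its irredundant facet description
\[
P=\{x: a_i\cdot x\le 1,\ i=1,\dots,m\},
\]
where the right-hand sides can be normalized to $1$ because $o\in\intt P$. Here $m=F(P)$ and each $a_i\neq 0$ is determined by the facet $F_i=P\cap\{a_i\cdot x=1\}$. Also write $P=\conv\{v_1,\dots,v_k\}$, where $v_1,\dots,v_k$ are the vertices, so $k=V(P)$.

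\emph{Step 1.} I will show that
\[
P^\circ=\{y: v_j\cdot y\le 1,\ j=1,\dots,k\},
\]
since $y\cdot x\le1$ for all $x\in P$ if and only if it holds on the vertices. This is an intersection of finitely many half-spaces. It is bounded because $P$ contains a ball $rB^n$, which gives $P^\circ\subset r^{-1}B^n$. It contains $o$ in its interior because $P$ is bounded. So $P^\circ$ is a full-dimensional polytope.

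\emph{Step 2.} I will show that $V(P^\circ)=F(P)$ by proving that the vertices of $P^\circ$ are exactly $a_1,\dots,a_m$.
\begin{enumerate}
\item Each $a_i$ lies in $P^\circ$.
\item The set of $y\in P^\circ$ with $v_j\cdot y=1$ for all $v_j\in F_i$ is just $\{a_i\}$. This is because $F_i$ contains $n$ affinely independent vertices spanning the hyperplane $\{a_i\cdot x=1\}$, which does not pass through $o$, so these linear equations determine $y$ uniquely. Hence $a_i$ is a vertex of $P^\circ$.
\item Conversely, let $y$ be a vertex of $P^\circ$. Then the active constraints $\{v_j: v_j\cdot y=1\}$ span $\R^n$. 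The set $\{x\in P: x\cdot y=1\}$ is a face of $P$ containing these $n$ linearly independent points. Since the plane $\{x\cdot y=1\}$ misses $o$, these points are affinely independent, so this face has dimension $n-1$. It is therefore a facet $F_i$, and comparing the two descriptions of that supporting hyperplane gives $y=a_i$.
\end{enumerate}
Distinct facets give distinct $a_i$, so $V(P^\circ)=m$.

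\emph{Step 3.} I will apply Step 2 to $P^\circ$ in place of $P$. This needs the bipolar identity $(P^\circ)^\circ=P$, which the paper records. It gives $V(P)=V((P^\circ)^\circ)=F(P^\circ)$, which is the second identity.

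The only step needing care is the converse direction in Step 2. There one must check that the active vertex set at a vertex of $P^\circ$ really spans a facet of $P$, not a lower-dimensional face. That follows from the rank condition combined with the fact that the supporting hyperplane avoids the origin.
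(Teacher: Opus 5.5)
Your argument is correct and complete. Note that the paper gives no proof of its own. It cites Bayer--Lee for the standard fact that polarity about an interior point induces an inclusion-reversing bijection between the face lattices of $P$ and $P^z$, sending $k$-faces to $(n-1-k)$-faces, and reads the two counts off from that.

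Your route is a self-contained, elementary proof of just the vertex--facet part. You use the two descriptions of $P$: the normalized facet normals $a_i$ of $P$ are exactly the vertices of $P^\circ$, and the bipolar identity gives the other count. (A small aside: in Step 2(3), linear independence of the active $v_j$ already implies affine independence, so the remark that the plane misses $o$ is only needed in Step 2(2).)

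What the citation provides beyond your stated conclusion is the incidence reversal. The paper relies on this when it invokes the lemma: for example $\Delta(P^{s(P)})=d(P)$ in Lemma~\ref{lem:combinatorial-alternative}, $m(G_v)=d(v)$ in Lemma~\ref{lem:combinatorial}, and ``polarity reverses the face lattice'' in Lemma~\ref{lem:minimizer-exclusion}. Your proof already contains this. Step 3 shows that the irredundant facet normals of $P^\circ$ are exactly the $v_j$, so the facets of $P^\circ$ are $\{y\in P^\circ: v_j\cdot y=1\}$, one for each vertex $v_j$. The vertex $a_i$ lies on the facet indexed by $v_j$ if and only if $v_j\cdot a_i=1$, that is, if and only if $v_j\in F_i$. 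Adding that one line would make your proof cover every use of the lemma in the paper.
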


The following lemma follows from the affine invariance property of volume product, and the basic fact that the
number of vertices of the limit polytope is no more than the limsup of the number of vertices of the convergent sequence of polytopes.
\begin{lemma}[Compactness in a bounded vertex class]\label{lem:bounded-class}
Fix $N\geq n+1$. Let $\mathcal C_N$ be the class of all  polytopes in $\R^n$ with at most $N$ vertices. Then $\VP$ attains its infimum on $\mathcal C_N$.
\end{lemma}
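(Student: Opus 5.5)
We take a minimizing sequence, normalize it by affine maps, and pass to a limit. Let $m=\inf_{P\in\mathcal C_N}\VP(P)$, which is finite and nonnegative, and choose $P_k\in\mathcal C_N$ with $\VP(P_k)\to m$.

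\emph{Normalization.} Since $\VP$ is invariant under affine isomorphisms, we may replace each $P_k$ by an affine image in John position: $B^n\subset P_k\subset nB^n$. Each $P_k$ has at most $N$ vertices. We list them as $v^k_1,\dots,v^k_N$, repeating vertices if there are fewer than $N$; all of them lie in $nB^n$.

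\emph{Extraction of a limit.} By compactness of $(nB^n)^N$, pass to a subsequence with $v^k_i\to v_i$ for every $i$. Then $P_k=\conv\{v^k_1,\dots,v^k_N\}$ converges in the Hausdorff metric to $P=\conv\{v_1,\dots,v_N\}$. Moreover $B^n\subset P\subset nB^n$, so $P$ is a full-dimensional polytope with at most $N$ vertices. This is the fact the paper cites: vertices of the limit are limits of vertices, so their number is at most $\liminf$ (hence $\limsup$) of the vertex counts. Hence $P\in\mathcal C_N$.

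\emph{Continuity of $\VP$.} The remaining point is that $\VP(P_k)\to\VP(P)$. Volume is continuous in the Hausdorff metric, so $|P_k|\to|P|$. For the polar factor, note that $s(P_k)$ is characterized by $c(P_k^{s(P_k)})=s(P_k)$, or equivalently as the minimizer of the strictly convex function $z\mapsto|P_k^z|$. All the $P_k$ contain $B^n$ and lie in $nB^n$, so for $z$ in a fixed compact subset of $\intt B^n$ the polars $P_k^z$ are uniformly bounded and converge to $P^z$. Thus the functions $z\mapsto|P_k^z|$ converge locally uniformly on $\intt B^n$ to $z\mapsto|P^z|$. We also need a uniform blow-up near the boundary: since $P_k\supset B^n$ and the reverse bound $|K||K^z|\ge c_n>0$ holds, the minimizers $s(P_k)$ stay in a fixed compact subset of $\intt P$. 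For a simpler argument, $|P_k^z|\ge |(nB^n)^z|$, which tends to infinity as $z$ approaches $\partial(nB^n)$, but we really need to localize inside $P$. Alternatively, use the minimum directly: $\min_z|P_k^z|\to\min_z|P^z|$ follows from locally uniform convergence of convex functions together with a uniform coercivity. Coercivity holds because $|P_k^z|\ge$ (the volume of a cap-dual cone), which is large when $z$ is near a supporting hyperplane of $P_k$, and the supporting hyperplanes converge.

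\emph{Main obstacle.} The main work is this continuity of the Santaló point and of $|P^{s(P)}|$ under Hausdorff convergence. Once it is established, we get $\VP(P)=\lim\VP(P_k)=m$, so the infimum is attained on $\mathcal C_N$.
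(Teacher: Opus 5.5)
Your argument is the paper's proof: put a minimizing sequence in John position, list the vertices with repetition, and extract a convergent subsequence whose limit contains $B^n$ and hence lies in $\mathcal C_N$; then conclude by continuity of $\VP$. The one difference is that the paper does not reprove this continuity but cites the Hausdorff-continuity of the volume product \cite[Lemma 3]{FMZ2012}, and you should cite it too, because your own sketch of it is loose. The reverse bound $|K|\,|K^z|\ge c_n$ does not confine the Santal\'o points, and locally uniform convergence on $\intt B^n$ alone does not suffice. What works is locally uniform convergence on $\intt P$, combined with the uniform blow-up $|P_k^z|\ge c_n/\operatorname{dist}(z,\partial P_k)$ for $z\in\intt P_k$, which follows from $P_k\subset nB^n$.
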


\begin{proof}
Let $(P_m)\subset\mathcal C_N$ be a minimizing sequence. By affine invariance of $\VP$, we may assume
 each $P_m$ is in John position after performing an affine transformation. Thus, by John's lemma  we have
\[
        B^n\subset P_m\subset nB^n.
\]
Write
\[
        P_m=\conv\{x_{m,1},\ldots,x_{m,N}\},
\]
allowing repetitions when $P_m$ has fewer than $N$ vertices. Passing to a subsequence we may assume $x_{m,i}\to x_i$ for every $i$. Set
\[
        P=\conv\{x_1,\ldots,x_N\}.
\]
The inclusions pass to the Hausdorff limit, so $B^n\subset P$ and $P$ is full-dimensional. Also $P\in\mathcal C_N$. Hausdorff-continuity of the volume product \cite[Lemma 3]{FMZ2012} then gives
\[
        \VP(P)=\lim_{m\to\infty}\VP(P_m),
\]
so $P$ is a minimizer.
\end{proof}

A \emph{shadow system} along a unit direction \(\theta\in\mathbb S^{n-1}\), as introduced  by Rogers and Shephard \cite{RogersShephard1958}, is a family 
\[
        L_t=\conv\{x+t\alpha(x)\theta:x\in B\},
\]
where the {\it base set}  $B\subset\R^n$ is bounded, the function $\alpha:B\to\R$ is bounded, and $t$ ranges over an interval. It is non-degenerate if every $L_t$ has non-empty interior. The following result of Meyer and Reisner, which combines Theorem 1, Proposition 7 and their consequences in \cite{MR}, is very important for our proof. One can also refer to the complete statement in \cite[Proposition 1]{FMZ2012}.

\begin{theorem}[Theorem of Meyer--Reisner \cite{MR}] \label{thm:shadow}
Let $(L_t)_{t\in[-a,a]}$ be a non-degenerate shadow system in $\R^n$ along a direction $\theta\in \mathbb S^{n-1}$. Then
\[
        t\longmapsto |L_t^{s(L_t)}|^{-1}
\]
is convex on $[-a,a]$. If, in addition, $t\mapsto |L_t|$ is affine and $t\mapsto \VP(L_t)$ is constant on $[-a,a]$, then there exist $w\in\R^n$ and $\beta\in\R$ such that, for every $t\in[-a,a]$, one has $L_t=A_t(L_0)$, where $A_t:\mathbb{R}^n \to \mathbb{R}^n$ is the affine map defined by
\[
         A_t(x)=x+t(w\cdot x+\beta)\theta.
\]
\end{theorem}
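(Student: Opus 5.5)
The statement is the Meyer--Reisner theorem on shadow systems. To prove it, I would first reduce to the case where the base set $B$ is finite. Both $t\mapsto |L_t|$ and $t\mapsto |L_t^{s(L_t)}|$ are continuous under Hausdorff approximation of a non-degenerate system, and a pointwise limit of convex functions is convex. So the convexity claim for general $B$ follows from the claim for polytopal shadow systems. Fix coordinates with $\theta=e_n$ and write $x=(x',x_n)$.

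For the convexity claim, I would use the characterization
\[
\frac{1}{|L_t^{s(L_t)}|}=\sup_{z\in\intt L_t}\frac{1}{|(L_t-z)^\circ|},
\]
which holds because $s(L_t)$ minimizes $z\mapsto |L_t^z|$. A supremum of convex functions is convex, so it suffices to show the following: for $t_0,t_1$ with midpoint $t$, and any admissible $z$, one can find points $z_0,z_1$ such that
\[
|(L_t-z)^\circ|^{-1}\le \tfrac12\bigl(|(L_{t_0}-z_0)^\circ|^{-1}+|(L_{t_1}-z_1)^\circ|^{-1}\bigr).
\]
The polar $(L_t-z)^\circ$ consists of the $y=(y',s)$ with $y\cdot(x-z)+t\alpha(x)s\le 1$ for all $x\in B$. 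For fixed $y'$, the set of admissible $s$ is an interval. I would follow Campi--Gronchi and Meyer--Reisner and translate the fibers of the polar in the $e_n$ direction. Parametrize the polar in the hyperplane picture by $y'$ and the line $\{s\}$. Choosing $z_0,z_1$ appropriately, namely shifting only the $e_n$ coordinate by $\pm(t_1-t_0)\cdot(\text{const})$, one can show that the set
\[
\bigl\{(y',s):\ (y',s)\text{ arises as }\tfrac12\text{-averages of fibers}\bigr\}
\]
sits inside $(L_t-z)^\circ$. This reduces the problem to a one-dimensional inequality on each line parallel to $e_n$: the harmonic-type inequality $\frac{1}{|A_t|}\le\frac12\bigl(\frac1{|A_0|}+\frac1{|A_1|}\bigr)$ for sections. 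Integrating over $y'$, with Brunn--Minkowski in the dual parametrization (in the form of the Busemann-type convexity of $1/\text{volume}$ for polar fibers), yields the claim. \textbf{The main obstacle} is this fiberwise step: it requires setting up the correct parametrization $y\mapsto (y'/s\text{-type})$ so that the polar fibers become linear in $t$.

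For the equality part, I would assume $|L_t|$ is affine and $\VP(L_t)$ is constant. Then $|L_t^{s}|^{-1}=|L_t|/\VP$ is affine, so equality holds in every step of the convexity argument. Equality in the fiber inequalities forces every fiber interval of the polar at time $t$ to be an exact average of those at $t_0,t_1$, with the same length. Dualizing, the support function of $L_t$ in directions $(y',1)$ changes linearly in $t$, with slope given by a linear function of $y'$ plus a constant. This means that for each extreme point $x$ of $L_0$ one has $\alpha(x)=w\cdot x+\beta$ for a fixed $w,\beta$. Hence $L_t=A_t(L_0)$ with $A_t(x)=x+t(w\cdot x+\beta)\theta$. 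The subtle point is that equality must be extracted for a.e.\ $y'$ and then upgraded to all extreme points using continuity and the polytope approximation; alternatively one can prove the equality case directly for polytopes, where the fibers are piecewise linear.
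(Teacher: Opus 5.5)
The paper does not prove this theorem; it quotes it from Meyer--Reisner, in the form of Fradelizi--Meyer--Zvavitch, Proposition~1. Your sketch therefore has to stand on its own. The reduction to finite $B$ and the reformulation via $\sup_z |(L_t-z)^\circ|^{-1}$ are fine, but the central step fails as stated.

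\textbf{The fiberwise step.} The claimed inclusion of $\tfrac12$-averages of the fibers (parallel to $\theta$) at times $t_0,t_1$ into the fiber at time $t$ is false already for $n=1$, where the fiber is the whole polar. Take $L_\tau=[-1,1+2\tau]$ (base $\{-1,1\}$, speeds $0,2$). Every polar $L_\tau^{z'}$ has length at least $2/(1+\tau)$, with equality at the midpoint. Since $\tau\mapsto 2/(1+\tau)$ is strictly convex, no choice of $z_0,z_1$ gives $|L_t^{z}|\ge\tfrac12\bigl(|L_{t_0}^{z_0}|+|L_{t_1}^{z_1}|\bigr)$ when $z=s(L_t)$.

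On a line parallel to $\theta$, convexity gives at best a harmonic-mean relation for the fiber endpoints. Such fiberwise information does not survive integration over $y'\in\theta^\perp$. For $\ell^{(1)}_\tau=1/(1+\tau)$ and $\ell^{(2)}_\tau=1/(1-\tau)$, each $1/\ell^{(i)}_\tau$ is affine, yet $1/(\ell^{(1)}_\tau+\ell^{(2)}_\tau)=(1-\tau^2)/2$ is strictly concave, because the harmonic mean is superadditive. So Brunn--Minkowski cannot simply be appended after a fiberwise step along lines parallel to $\theta$. The step you call the main obstacle is the whole theorem, and the decomposition you propose for it is the wrong one.

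\textbf{What is missing.} One route that works splits the polar by $\theta^\perp$ and uses radial fibers. With $\theta=e_n$ and $y=q^{-1}(p,1)$, $p\in\R^{n-1}$, $q>0$, one has $|(L_\tau-z_\tau)^\circ\cap\{y_n>0\}|=\frac1n\int_{\R^{n-1}}h_{L_\tau-z_\tau}(p,1)^{-n}\,dp$. If $z_\tau$ moves affinely along $\theta$, then $(p,\tau)\mapsto h_{L_\tau-z_\tau}(p,1)$ is jointly convex. Borell--Brascamp--Lieb (exponent $-1/n$ in $n-1$ variables, giving exponent $-1$) then shows that the reciprocal $U(\tau)$ of this half-volume is convex. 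The same holds for the reciprocal $D(\tau)$ of the lower half-volume. Equivalently, lift to $\conv\{(x,\alpha(x))\}\subset\R^{n+1}$ and apply Busemann's theorem to sections through $\theta^\perp\times\{0\}$.

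\textbf{Non-symmetric bodies.} This is still not enough, since $|(L_\tau-z_\tau)^\circ|^{-1}=UD/(U+D)$ and this expression is concave in $(U,D)$. Here the freedom in the center is essential, and nothing in your proposal addresses it. Your family $z_0=z-(t-t_0)\gamma\theta$, $z_1=z+(t_1-t)\gamma\theta$ is the right one. However, $\gamma$ must be chosen so that $U/D$ takes the same value at $t_0$ and at $t_1$. This is possible by monotonicity in $\gamma$ and the intermediate value theorem, since these ratios tend to $0$ or $\infty$ as a center approaches $\partial L_{t_i}$ along $\theta$. Monotonicity and $1$-homogeneity of $UD/(U+D)$ then give the midpoint inequality, and one takes the supremum over $z$.

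\textbf{The equality part.} Your sketch cannot be repaired as written. The property you derive, that $h_{L_t}(p,1)$ is affine in $t$, fails for genuine equality cases. For the shear $L_t=\{(x_1,x_2+tx_1):x\in[-1,1]^2\}$, both $|L_t|$ and $\VP(L_t)$ are constant, but $h_{L_t}(p,1)=|p+t|+1$. The rigidity has to come instead from the equality cases of the Brunn--Minkowski-type step, combined with the ratio condition.
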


The following corollary is a variation of the above theorem that we shall need, which is stated and proved in \cite[Corollary 2]{FMZ2012}.

\begin{corollary}[\cite{FMZ2012}]\label{cor:one-sided}
Let $(L_t)_{t\in[-a,a]}$ be a non-degenerate shadow system in $\R^n$ along a direction $\theta\in \mathbb S^{n-1}$, and suppose that $t\mapsto |L_t|$ is affine. If
\begin{equation}\label{l0min}
        \VP(L_0)=\min_{t\in[-a,a]}\VP(L_t),
\end{equation}
then $\VP(L_t)$ is constant on $[0,a]$ or constant on $[-a,0]$.
\end{corollary}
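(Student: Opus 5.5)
The plan is to derive Corollary~\ref{cor:one-sided} from the convexity part of Theorem~\ref{thm:shadow}, together with its rigidity part applied on a half-interval. Set
\[
f(t)=|L_t^{s(L_t)}|^{-1},\qquad g(t)=|L_t|,
\]
so that $\VP(L_t)=g(t)/f(t)$. By Theorem~\ref{thm:shadow}, $f$ is convex and positive on $[-a,a]$. By hypothesis, $g$ is affine and positive, since the system is non-degenerate. The minimality \eqref{l0min} is equivalent to the statement that
\[
h(t):=\frac{f(t)}{g(t)}=\VP(L_t)^{-1}
\]
attains its maximum at $t=0$. Put $c:=h(0)$ and consider
\[
\varphi(t):=f(t)-c\,g(t).
\]
This function is convex, since $f$ is convex and $cg$ is affine. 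Moreover $\varphi\le 0$ on $[-a,a]$, because $h\le c$ and $g>0$, and $\varphi(0)=0$.

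The key step is elementary convex analysis. A convex function $\varphi\le0$ on $[-a,a]$ with an interior zero at $t=0$ must vanish identically on $[0,a]$ or on $[-a,0]$. To see this, suppose there are $t_1\in(0,a]$ and $t_2\in[-a,0)$ with $\varphi(t_1)<0$ and $\varphi(t_2)<0$. Write $0$ as a convex combination $0=\lambda t_1+(1-\lambda)t_2$ with $\lambda\in(0,1)$. Convexity then gives
\[
0=\varphi(0)\le \lambda\varphi(t_1)+(1-\lambda)\varphi(t_2)<0,
\]
which is a contradiction. Hence $\varphi\equiv0$ on one of the two closed half-intervals. On that half-interval $f=cg$, so $\VP(L_t)=g/f=1/c$ is constant there, which is the claim.

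I expect no real obstacle here. The only care needed is to confirm that Theorem~\ref{thm:shadow} applies to the whole interval $[-a,a]$, and it does, since non-degeneracy is assumed. The argument also does not need the rigidity conclusion of the theorem. If one wants $L_t=A_t(L_0)$ on the half-interval where $\VP$ is constant, one can afterwards apply the second part of Theorem~\ref{thm:shadow} to the reparametrized shadow system on that half-interval, for example $[0,a]$ recentered as $[-a/2,a/2]$. The recentering keeps the shadow-system structure, because the shift $t\mapsto t+a/2$ only modifies the base points by $(a/2)\alpha(x)\theta$.
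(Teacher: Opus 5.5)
Your argument is correct and is essentially the paper's own verification: your $\varphi=f-cg$ equals $-D/|L_0|$, where $D(t)=h(0)\,|L_t|-|L_0|\,h(t)$ with $h(t)=|L_t^{s(L_t)}|^{-1}$ is the concave function the paper uses. In both cases only the convexity part of Theorem~\ref{thm:shadow} is needed. Your convexity inequality actually proves more: fixing any $t_2<0$, it forces $\varphi(t_1)=0$ for every $t_1\in(0,a]$, and symmetrically for negative times, so $\VP(L_t)$ is constant on all of $[-a,a]$, which is the stronger statement the paper records as Lemma~\ref{simplified}.
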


For completeness, we include a short verification of Corollary~\ref{cor:one-sided} from Theorem~\ref{thm:shadow}. In fact, the same argument gives the following slightly stronger conclusion: under the assumptions of Corollary~\ref{cor:one-sided}, the volume product is constant throughout the whole interval \([-a,a]\). We record this as the following lemma.

\begin{lemma}\label{simplified}
Under the same assumptions as in Corollary~\ref{cor:one-sided}, the quantity \(\VP(L_t)\) is constant on \([-a,a]\).
\end{lemma}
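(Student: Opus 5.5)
The natural route is through Theorem~\ref{thm:shadow}. Write $|L_t|$ as an affine function $a_0+a_1t$, which is positive on $[-a,a]$ by non-degeneracy, and set $g(t)=|L_t^{s(L_t)}|^{-1}$. By Theorem~\ref{thm:shadow}, $g$ is convex on $[-a,a]$ and positive, and
\[
\VP(L_t)=\frac{|L_t|}{g(t)} .
\]
Hypothesis \eqref{l0min} says precisely that $\VP(L_t)\ge \VP(L_0)=:m>0$ for all $t$, which is equivalent to
\[
g(t)\le \frac{1}{m}\,|L_t| =: h(t)\qquad (t\in[-a,a]),
\]
with equality at $t=0$. So the convex function $g$ lies below the affine function $h$ and touches it at the interior point $0$.

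The key step is the elementary fact that this forces $g\equiv h$ on the whole interval. Take $t\in(0,a]$. Writing $0$ as a convex combination of $-t'$ and $t$ for any $t'\in(0,a]$, namely $0=\lambda t+(1-\lambda)(-t')$ with $\lambda=t'/(t+t')\in(0,1)$, convexity and the inequality $g\le h$ give
\[
h(0)=g(0)\le \lambda g(t)+(1-\lambda)g(-t')\le \lambda h(t)+(1-\lambda)h(-t')=h(0).
\]
All the inequalities are therefore equalities. Since $\lambda>0$ and $1-\lambda>0$, we get $g(t)=h(t)$ and $g(-t')=h(-t')$. As $t,t'$ are arbitrary in $(0,a]$, it follows that $g=h$ on $[-a,a]$.

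Consequently $\VP(L_t)=|L_t|/g(t)=m$ for all $t\in[-a,a]$, which is the claim, and Corollary~\ref{cor:one-sided} follows immediately. The main point needing care is that a convex function touching an affine majorant at an interior point must coincide with it everywhere. This is exactly where we use that $0$ is interior, i.e.\ that the interval extends on both sides ($a>0$).
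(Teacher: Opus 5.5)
Your proposal is correct and is essentially the paper's argument: your gap $\tfrac1m|L_t|-|L_t^{s(L_t)}|^{-1}$ equals $|L_0|^{-1}D(t)$, where $D(t)=h(0)f(t)-f(0)h(t)$ is the paper's concave function, with $f(t)=|L_t|$ and $h(t)=|L_t^{s(L_t)}|^{-1}$. Your convex-combination step is exactly the justification the paper leaves implicit when it asserts that a nonnegative concave function vanishing at the interior point $0$ must vanish identically.
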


\begin{proof}[A verification of Corollary \ref{cor:one-sided} via Theorem \ref{thm:shadow}] Denote $f(t) = |L_t|$, $g(t) = |L_t^{s(L_t)}|$, and $h(t) = 1/g(t)$, for $t\in [-a,a]$. 
By Theorem \ref{thm:shadow}, $h$ is convex. By the assumption, $f,g,h>0$, the function $f$ is affine, and 
\[ \frac{f(t)}{h(t)} \ge \frac{f(0)}{h(0)}, \qquad \forall t\in[-a,a].\]
Then, 
   \[D(t) = h(0)f(t) - f(0)h(t) \ge 0, \qquad \forall t\in[-a,a].\]
Note that $D(t)$ is concave, since it is the difference between an affine function and a convex function. A nonnegative concave function on $[-a,a]$ vanishes at $t=0$ if and only if it vanishes identically. Therefore $D(t)\equiv 0$, which implies that $\VP(L_t)$ is a constant on $[-a,a]$.
\end{proof}

\section{Admissible speeds and shadow flows}\label{sec:admissible}

Let \(P\subset\mathbb R^3\) be a fixed three-dimensional polytope  with $V$ distinct vertices  \(x_1,\ldots,x_V\).  
We first introduce our key notion, that of an
admissible speed. An
$\alpha=(\alpha_1,\ldots,\alpha_V)\in\mathbb R^V$
will be called a {\it speed vector}. Given a direction
\(\theta\in \sn\) and a speed vector \(\alpha\in\mathbb R^V\), define a shadow system 
\[
        x_i(t)=x_i+t\alpha_i\theta,
        \qquad
        P_t=\conv\{x_i(t):1\leq i\leq V\}, \quad t\in [-c,c],
\]
where $c>0$. 
In the following, for a facet $F$ of $P$, $\lin(F-F)$ denotes the two-dimensional subspace of $\mathbb{R}^3$ parallel to $F.$ (We may use the convention $\conv(\varnothing)=\varnothing$ for completeness.)

We now introduce our first key definition of admissible speeds, which is designed to keep every old facet planar during the deformation. 

\begin{definition}[Admissible speed]\label{def:admissible}
A speed vector $\alpha \in \R^V$ is called \emph{  $\theta$-admissible} if the following condition holds for every facet $F$ of $P$. 
\begin{itemize}[leftmargin=2.2em]
\item[(1)] If $\theta\notin \lin(F-F)$, then there exists an affine function $\ell_F:\aff(F)\to\R$ such that $\ell_F(x_i)=\alpha_i$ for every vertex $x_i$ of $F$.
\item[(2)] If $\theta\in \lin(F-F)$, no constraint is imposed on the values $\alpha_i$ on the vertices of $F$.
\end{itemize}
When \(\theta\) is fixed or clear from the context, we simply say that \(\alpha\) is admissible.

Denote by $A_\theta(P) \subset \R^V$ the linear space of all $\theta$-admissible speeds. 
\end{definition}

It is simple to see that \(A_\theta(P)\) contains the following globally affine speeds as a subspace.

\begin{definition}[Trivial speeds, or globally affine speeds]\label{def:trivial-speeds}
A speed vector $\alpha\in\R^V$ is called \emph{trivial}, or \emph{globally affine}, if there exist $w\in\R^3$ and $\beta\in\R$ such that for $1\leq i\leq V$,
\[
        \alpha_i=w\cdot x_i+\beta.
\]
The set of all trivial speeds is denoted by
\[ 
\mathcal T(P) = \left\{
        (w\cdot x_i+\beta)_{1\leq i \leq V}:
        w\in\mathbb R^3,\ \beta\in\mathbb R
        \right\}.
\]
Then $\mathcal T(P)$ is a linear subspace of $A_\theta(P)$ for every direction $\theta$, and $\dim \mathcal T(P)=4$, since the space of affine functions on $\R^3$ has dimension 4. 
\end{definition}

\begin{remark}[Non-trivial speeds]\label{rem:nontrivial}
A \emph{non-trivial} admissible speed means that it is not globally affine. Note that $\mathcal T(P)$ depends on $P$ but is independent of $\theta$.
\end{remark}

For simplicity, we introduce the following notion of a shadow flow.

\begin{definition}[Shadow flow]\label{def:shadow-flow}
A \emph{shadow flow} is a shadow system along a direction $\theta$ with a $\theta$-admissible speed. More precisely, given a direction \(\theta\in \sn\) and a  $\theta$-admissible speed \(\alpha\in\mathbb R^V\), the shadow flow $(P_t)_{t\in[-c,c]}$ is defined by  
\[
   x_i(t) = x_i+t\alpha_i\theta, \qquad  P_t=\conv\{x_i(t)   :1\leq i\leq V\}, \quad t\in [-c,c],
\]
where $c>0$. 
\end{definition}

The term shadow flow may also be understood in a broader sense; see Remark \ref{rem:flow}. 

The main goal of this section is to establish Lemmas
\ref{lem:facet-persistence}, \ref{lem:volume-affine} and Proposition \ref {prop:face-affine-shadow}, for shadow flows. Since the first lemma is rather intuitive, we introduce the following definitions to make the argument clear. 
Throughout, we denote the vertices, edges, and facets  of the three-dimensional polytope \(P\) by 
\[
\mathcal V=\{x_1,\ldots,x_V\},\qquad
\mathcal E=\{E_1,\ldots,E_{n_1}\},\qquad
\mathcal F=\{F_1,\ldots,F_{n_2}\}.
\]
Let
\[
\mathcal I_0=\{1,\ldots,V\},\qquad
\mathcal I_1=\{1,\ldots,n_1\},\qquad
\mathcal I_2=\{1,\ldots,n_2\}
\]
be the corresponding sets of labels.

\begin{definition}[Face lattice] \label{def-fll}
    The \emph{face lattice} (or labeled  face lattice)   of \(P\), denoted by \(\mathcal S(P)\), is encoded by the index sets $\mathcal I_0, \mathcal I_1, \mathcal I_2$ together with the set-valued incidence maps
\[\mathcal I_0
\ \xrightarrow{\ \Phi_{1}\ }\
2^{\mathcal I_1},
\qquad
\mathcal I_1
\ \xrightarrow{\ \Phi_{2}\ }\
2^{\mathcal I_2},
\]
where
\[
\Phi_{1}(i)
=
\{j\in\mathcal I_1: x_i\in E_j\},\qquad \Phi_{2}(j)
=
\{k\in\mathcal I_2: E_j\subset F_k\}.
\]
\end{definition}

The above notation $\Phi_1$ and $\Phi_2$ induce the following set-valued incidence map
\[
\Phi_{0}(k)
=
\left\{
i\in \mathcal I_0:
\text{ there exists } j\in\mathcal I_1
\text{ such that }
j\in\Phi_1(i)
\text{ and }
k\in\Phi_2(j)
\right\}.
\]
Then, the vertex-facet incidence reads
\[
x_i\in F_k
\quad\Longleftrightarrow\quad
i\in \Phi_{0}(k).
\]

\begin{definition}[Persistence of the face lattice]
    We say that a family \((P_t)_{t\in [-c,c]}\) of polytopes with $P_0=P$
\emph{preserves the  face lattice} if 
\[
\mathcal S(P_t)=\mathcal S(P) \qquad {\rm for~all~} t.
\]
More precisely, $\mathcal S(P_t)= \mathcal S(P)$ means that the vertices, edges, and facets of \(P_t\) can be labeled as
\[
\{x_i(t):i\in\mathcal I_0\},\qquad
\{E_j(t):j\in\mathcal I_1\},\qquad
\{F_k(t):k\in\mathcal I_2\},
\]
where \(x_i(0)=x_i\), \(E_j(0)=E_j\), and \(F_k(0)=F_k\), such that
\[
x_i(t)\in E_j(t)
\quad\Longleftrightarrow\quad
x_i\in E_j \quad\Longleftrightarrow\quad j\in \Phi_1(i),
\]
and
\[
E_j(t)\subset F_k(t)
\quad\Longleftrightarrow\quad
E_j\subset F_k \quad\Longleftrightarrow\quad k\in \Phi_2(j),
\]
for all \(i\in\mathcal I_0\), \(j\in\mathcal I_1\), and
\(k\in\mathcal I_2\).  
\end{definition}
\begin{remark}
    Our labeled face lattice is just the usual face lattice, i.e. the inclusion poset of all faces, with fixed labels attached to the faces.
Usually, saying that two polytopes have the same face lattice means that there is an inclusion-preserving isomorphism between their face lattices. 
Here the labels play the role of this isomorphism: faces with the same label are identified, and the incidence relations among the labeled faces are required to be the same.
\end{remark}  

Next, we introduce a lemma showing that the shadow flow $(P_t)_{t\in[-c,c]}$ preserves the face lattice for short time.
\begin{lemma}[Persistence of the face lattice]\label{lem:facet-persistence}
Let \(P\subset\mathbb R^3\) be a convex polytope with vertices \(x_1,\ldots,x_V\). Let $\alpha \in A_\theta(P)$, and $(P_t)$ be a shadow flow given by
\[
        x_i(t)=x_i+t\alpha_i\theta,
        \qquad
        P_t=\operatorname{conv}\{x_i(t):1\le i\le V\}.
\]
Then, there exists \(c>0\) such that $(P_t)_{t\in[-c,c]}$ preserves the face lattice. 
\end{lemma}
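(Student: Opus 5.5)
Our plan is to show that each facet $F_k$ of $P$ persists as a facet of $P_t$ for small $|t|$, namely
\[
F_k(t):=\conv\{x_i(t): i\in\Phi_0(k)\},
\]
and then to read off vertices and edges from the facets.

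\emph{Step 1: each $F_k(t)$ is a planar convex polygon.} Fix a facet $F=F_k$ with outer normal $u$, so that $F\subset\{u\cdot x=h\}$.

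If $\theta\notin\lin(F-F)$, condition (1) of Definition~\ref{def:admissible} gives an affine $\ell_F$ on $\aff F$ with $\ell_F(x_i)=\alpha_i$. The map $T_t(x)=x+t\ell_F(x)\theta$ is then an affine map of $\aff F$ into $\R^3$. For small $t$ it is injective, since it is the identity plus a small perturbation. Hence $F_k(t)=T_t(F)$ is an affine image of $F$. It is therefore a planar polygon with the same combinatorics as $F$: its vertices are $x_i(t)$, $i\in\Phi_0(k)$, and its edges are the images of the edges of $F$.

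If $\theta\in\lin(F-F)$, each $x_i(t)$ with $x_i\in F$ stays in the plane $\aff F$. Then $F_k(t)$ is a polygon in that plane whose vertices $x_i+t\alpha_i\theta$ are small perturbations of the vertices of the convex polygon $F$. Because $F$ is strictly convex at each vertex (each vertex is extreme), small perturbations keep every $x_i(t)$ extreme and preserve the cyclic order. So again the combinatorics of $F$ are preserved.

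\emph{Step 2: these planes support $P_t$.} Let $H_k(t)=\aff F_k(t)$, with normal $u_k(t)$ and level $h_k(t)$ depending continuously on $t$ and equal to $u_k,h_k$ at $t=0$. Every vertex $x_i\notin F_k$ satisfies the strict inequality $u_k\cdot x_i<h_k$. Since there are finitely many pairs $(i,k)$, continuity gives $c>0$ such that
\[
u_k(t)\cdot x_i(t)<h_k(t)\quad\text{for all }i\notin\Phi_0(k),\ |t|\le c,
\]
for all $k$ at once. Thus each $H_k(t)$ is a supporting plane of $P_t$, and
\[
H_k(t)\cap P_t=\conv\{x_i(t):i\in\Phi_0(k)\}=F_k(t)
\]
is two-dimensional, hence a facet of $P_t$.

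\emph{Step 3: there are no other facets, and the lattices match.} The union $\bigcup_k F_k(t)$ is a closed polyhedral surface, obtained by gluing the deformed facets along common edges, as the images of the common edges of $P$. Each $F_k(t)$ lies in $\partial P_t$. So this union is a closed subset of the sphere $\partial P_t$ that is a topological $2$-sphere itself, hence it equals $\partial P_t$. A cleaner route to the same conclusion is to write $P_t=\bigcap_k\{u_k(t)\cdot x\le h_k(t)\}$ via Lemma~\ref{lem:polarity-face-lattice}-type duality.

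Consequently the facets of $P_t$ are exactly the $F_k(t)$, and the edges are the intersections $F_k(t)\cap F_{k'}(t)$ over adjacent pairs. By Step 1 these are the segments $E_j(t)$ with endpoints $x_i(t)$, $i\in$ endpoints of $E_j$. The vertices are the $x_i(t)$, all distinct and extreme for small $t$. The incidences $\Phi_1,\Phi_2$ are then unchanged.

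\emph{Main obstacle.} The delicate point is Step 1 in the case where the polygon $F$ is not a triangle. It is precisely the admissibility condition (1) that prevents the deformed vertices of a facet from becoming non-coplanar, which would split the facet into several faces. We will stress that uniformity in $c$ is automatic from finiteness.
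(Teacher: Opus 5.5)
Your Steps 1 and 2 are essentially the paper's argument. The paper writes the moved normal explicitly, $v_k(t)=(1+t\,\theta\cdot w_k)v_k-t(\theta\cdot v_k)w_k$, where you use the affine map $T_t$, and then runs the same continuity argument to get strict inequalities for the non-incident vertices, so each $F_k(t)$ is a facet of $P_t$. The genuine difference is in excluding new facets. Contrary to your closing remark, this is where the real work lies; Step 1 is immediate from admissibility. The paper sets $Q_t=\bigcap_k\{x\cdot v_k(t)\le h_k(t)\}\supset P_t$ and proves $Q_t=P_t$ by compactness. A vertex $y_n\notin P_{t_n}$ of $Q_{t_n}$ lies on three of these planes with independent normals. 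Its limit lies in three distinct facets of $P$, so it is a vertex $x_i$. Uniqueness of the triple intersection then forces $y_n=x_i(t_n)\in P_{t_n}$. This is elementary and also gives the facet description of $P_t$. You replace it with invariance of domain: an embedded $2$-sphere in $\partial P_t\cong\mathbb S^2$ is all of $\partial P_t$. That is valid and conceptually short.

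However, the claim that $\bigcup_k F_k(t)$ is a topological $2$-sphere is asserted, not proved, and it needs one more observation. For $k\neq k'$, $F_k(t)\cap F_{k'}(t)$ is a face of $P_t$, hence the convex hull of the $x_i(t)$ it contains. By your strict inequalities, these are exactly the $x_i(t)$ with $i\in\Phi_0(k)\cap\Phi_0(k')$. With this, take the map $\partial P\to\bigcup_k F_k(t)$ that is affine on the triangles of a fan triangulation of each facet; these pieces agree on shared edges. It is a continuous bijection from a compact space, hence a homeomorphism.

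The same incidence fact, not Step 1, is what identifies $F_k(t)\cap F_{k'}(t)$ with $E_j(t)$, a vertex, or $\varnothing$ in your final paragraph. Your ``cleaner route'' is not an argument: Lemma~\ref{lem:polarity-face-lattice} says nothing about $P_t$, and $P_t=\bigcap_k\{u_k(t)\cdot x\le h_k(t)\}$ is precisely what the paper has to prove.

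If you prefer to avoid topology, there is a combinatorial alternative. Each edge of the polygon $F_k(t)$ is an edge of $P_t$ that also lies in the deformed neighbour $F_{k'}(t)$. An edge of a $3$-polytope lies in exactly two facets. So $\{F_k(t)\}$ is closed under facet adjacency in $P_t$. The facet-adjacency graph is the graph of the polar polytope and hence connected, so this family contains every facet.
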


\begin{proof} 
Let \(F_k \in \mathcal F \) be any facet of \(P\),  $k\in \I_2$. We shall make use of the notation $\Phi_1$ and $\Phi_2$ in the definition of face lattice, and the induced vertex-facet incidence map $\Phi_0$. Recall that $i\in \Phi_0(k)$ means that $x_i$ is a vertex of $F_k$. 

Since $\alpha$ is \(\theta\)-admissible, the moved points 
\[
        x_i(t), \qquad i \in \Phi_{0}(k),
\]
remain contained in a single plane, which is denoted by \(\Pi_{F_k}(t)\). 
In fact, if Case (1) in Definition \ref{def:admissible} holds, after extending $\ell_{F_k}$ to an affine function on $\mathbb{R}^3$, there are $w_k\in \R^3$ and $\beta_k\in \R$, such that
\[
        x_i(t) =x_i + t\alpha_i\theta = x_i+ t(w_k\cdot x_i+\beta_k)\theta ,\qquad i\in \Phi_0(k).
\]

Let \(v_k\) be the unit outer normal vector of \(P\) corresponding to the facet \(F_k\). Set
\[
v_k(t)
=
(1+t\,\theta\cdot w_k)v_k
-
t(\theta\cdot v_k)w_k.
\]
Then a direct computation gives
\begin{equation}\label{eq-keephpk}
x_i(t)\cdot v_k(t)
=
h_P(v_k)+t ((\theta\cdot w_k )h_P(v_k)+(\theta\cdot v_k )\beta_k),
\qquad \forall i\in \Phi_0(k), 
    \end{equation}
which is independent of $i$. 

If Case (2) in Definition \ref{def:admissible} holds, then \(\Pi_{F_k}(t) =  \aff F_k\) for all $t$, and $x_i(t)=x_i + t\alpha_i\theta$ 
keeps in the same plane $\aff F_k$. Set $v_k(t)=v_k$ and $x_i(t)\cdot v_k(t)=h_P(v_k).$

First, for sufficiently small $|t|$, the points $x_i(t)$ remain extreme points (vertices) of $P_t$. Second, since
$
x_l\cdot v_k<h_P(v_k)$ for all $l\notin \Phi_0(k),$
by the continuity of \(x_i(t)\), \(x_l(t)\) and \(v_k(t)\), and by \eqref{eq-keephpk}, we have, for sufficiently small \(|t|\),
\[
x_l(t)\cdot v_k(t)<x_i(t)\cdot v_k(t),
\qquad i\in \Phi_0(k),\quad l\notin \Phi_0(k).
\]
Hence, for every pair \((i,k)\) with \(i\in \Phi_0(k)\), we have
\begin{equation}\label{eq-hptvkt}
x_i(t)\cdot v_k(t)=h_k(t),    
\end{equation} 
where $h_k(t) = h_{P_t}(v_k(t))$ equals the right-hand side of \eqref{eq-keephpk}.  
Thus
\[  
F_k(t)=\conv\{x_i(t):i\in \Phi_0(k)\}
\]
is still a facet of \(P_t\).
Third, we consider the one-dimensional faces (edges) of $P_t$. Suppose $E_j = F_{k_1}\cap F_{k_2} = \conv\{x_{i_1},x_{i_2}\}$ with $\{k_1,k_2\} = \Phi_2(j)$ and $j\in \Phi_1(i_1)\cap \Phi_1(i_2).$ Then, by the conclusions above, 
\[ x_{i_1}(t) \cdot v_{k_s}(t) = x_{i_2}(t) \cdot v_{k_s}(t)= h_{P_t}(v_{k_s}(t)), \qquad s=1,2,  \]
which means that $E_j(t) = F_{k_1}(t)\cap F_{k_2}(t) = \conv\{x_{i_1}(t),x_{i_2}(t)\}$ is an edge of $P_t$. 

It remains to show that no additional faces occur. 
Define
\[
Q_t=
\bigcap_{k\in\mathcal I_2}
\{x\in\mathbb R^3:x\cdot v_k(t)\le h_k(t)\}.
\]
By \eqref{eq-hptvkt} and the inequality in the preceding line, 
\[
P_t\subset Q_t,
\]
and
\[
x_i(t)\cdot v_k(t)=h_k(t)
\quad\Longleftrightarrow\quad
i\in\Phi_0(k).
\]

We claim that \(Q_t=P_t\) for all sufficiently small \(|t|\). Since \(Q_0=P\) is bounded, the outer facet normals of \(P\) positively span \(\mathbb R^3\). By continuity, the same is true for the vectors \(v_k(t)\) for small \(|t|\). Since the numbers \(h_k(t)\) are also
uniformly bounded, the polytopes \(Q_t\) are uniformly bounded.

Suppose, to the contrary, that \(Q_t\ne P_t\) for arbitrarily small \(t\). Then there are \(t_n\to0\) and a vertex \(y_n\) of \(Q_{t_n}\) such that
\[
y_n\notin P_{t_n}.
\]
By uniform boundedness, after passing to a subsequence,
\[
y_n\to y\in Q_0=P.
\]
At the vertex \(y_n\) of $Q_{t_n}$, choose a triple
\(k_1,k_2,k_3\in\mathcal I_2\) such that 
\[
v_{k_1}(t_n),\ v_{k_2}(t_n),\ v_{k_3}(t_n)
\]
are the normals of facets containing \(y_n\), and are linearly independent. Since there are only finitely many triples, we may pass to a subsequence and assume that the same triple \(k_1,k_2,k_3\) works for every \(n\). Then
\[
y_n\cdot v_{k_r}(t_n)=h_{k_r}(t_n),\qquad r=1,2,3.
\]
Letting \(n\to\infty\), we get
\[
y\in F_{k_1}\cap F_{k_2}\cap F_{k_3}.
\]
This intersection is a non-empty face of \(P\). Since \(k_1,k_2,k_3\) are distinct and every edge of a three-dimensional polytope is incident to exactly two facets, this face must be a vertex, say
\[
F_{k_1}\cap F_{k_2}\cap F_{k_3}=\{x_i\}.
\]
Thus \(i\in\Phi_0(k_r)\) for \(r=1,2,3\), and therefore
\[
x_i(t_n)\cdot v_{k_r}(t_n)=h_{k_r}(t_n),
\qquad r=1,2,3.
\]
Both \(y_n\) and \(x_i(t_n)\) lie in the three affine planes indexed by \(k_1,k_2,k_3\). Since their normals are linearly independent, these three planes have a
unique intersection point. Hence
\[
y_n=x_i(t_n)\in P_{t_n},
\]
contradicting the choice of \(y_n\).

Therefore \(Q_t=P_t\) for all sufficiently small \(|t|\). Thus, every facet of \(P_t\) is one of the facets \(F_k(t)\). Since every zero- or one-dimensional face of $P_t$ is a face of some facet, no new faces occur. By the above discussion, the labeled face lattice is preserved.
\end{proof}

{A byproduct of the above lemma is that the faces 
\[
E_j(t) = \conv \{x_i(t) : i\in \mathcal I_0, j\in \Phi_1(i) \}, \quad
F_k(t) = \conv \{x_i(t) : i \in \Phi_{0}(k)\}, \quad    j\in\mathcal I_1,\quad k\in\mathcal I_2,
\]
depend continuously on $t$.}

The following lemma shows that the volume of $P_t$ is affine in $t$ once the face lattice is preserved.

\begin{lemma} \label{lem:volume-affine}
If there exists $c>0$ such that shadow flow
\[
        P_t=\conv\{x_i+t\alpha_i\theta: 1\leq i\leq V\}
\]
preserves the face lattice, for all \(t \in [-c,c]\), then
\[
        t\longmapsto |P_t|
\]
is affine on \( [-c,c]\).
\end{lemma}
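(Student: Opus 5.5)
Because the face lattice is preserved, I will fix a combinatorial triangulation of $P$ that is carried along unchanged by the flow, and then show each simplex in it has volume affine in $t$.

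\emph{Step 1: triangulate.} Fix an interior point of $P$, or better a vertex $x_{i_0}$. Triangulate each facet $F_k$ not containing $x_{i_0}$ by a fan from one of its own vertices. This writes $P$ as a union of tetrahedra $\conv\{x_{i_0},x_a,x_b,x_c\}$ with disjoint interiors. Their vertex labels depend only on the labeled face lattice $\mathcal S(P)$.

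\emph{Step 2: transport the triangulation.} Since $\mathcal S(P_t)=\mathcal S(P)$, each $F_k(t)=\conv\{x_i(t):i\in\Phi_0(k)\}$ is a planar convex polygon with the same cyclic vertex order. (The cyclic order is determined by the edges, and the edge incidences are preserved.) So the same fan triangulates $F_k(t)$. Coning from $x_{i_0}(t)$ over the facets not containing it then triangulates $P_t$, with the same labels. Hence
\[
|P_t|=\sum_{\tau}\tfrac16\bigl|\det\bigl(x_a(t)-x_{i_0}(t),\,x_b(t)-x_{i_0}(t),\,x_c(t)-x_{i_0}(t)\bigr)\bigr|.
\]
For each tetrahedron the determinant keeps a constant sign for $t\in[-c,c]$. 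It cannot vanish, because each tetrahedron is nondegenerate: $x_{i_0}(t)$ lies strictly off $\aff F_k(t)$ for every facet not containing it. It is also continuous in $t$. So the absolute value may be dropped with a fixed sign.

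\emph{Step 3: affinity of each determinant.} Write $x_i(t)-x_{i_0}(t)=(x_i-x_{i_0})+t(\alpha_i-\alpha_{i_0})\theta$. By multilinearity, the determinant expands in powers of $t$. Every term of order $t^2$ or $t^3$ contains at least two columns proportional to $\theta$, so it vanishes. Each determinant is therefore affine in $t$, and so is the finite sum $|P_t|$.

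The main obstacle is Step 2: showing that the fixed combinatorial triangulation really triangulates $P_t$ for all $t\in[-c,c]$, and not just for small $t$. This needs, for every facet not containing $x_{i_0}$, that $x_{i_0}(t)$ stays strictly on the inner side of it, and that each facet polygon stays convex with the given vertex order. Both follow from the persistence assumption: vertices remain vertices, and vertex–facet incidences are exactly as in $P$. I will verify this carefully, citing the facet description $F_k(t)=\conv\{x_i(t):i\in\Phi_0(k)\}$ and Lemma~\ref{lem:polarity-face-lattice}-style combinatorial rigidity only through the labeled incidences.
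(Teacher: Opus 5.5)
Your proof is correct and follows essentially the same route as the paper. Both triangulate the facets combinatorially, cone to an apex that moves affinely along $\theta$, and conclude by multilinearity together with the constant sign of each nonvanishing determinant on the connected interval $[-c,c]$. The only difference is the apex: the paper cones from the vertex average $p(t)=\frac1V\sum_i x_i(t)$ over all facets, which is interior because every facet misses some vertex, whereas you cone from a vertex $x_{i_0}(t)$ over the facets not containing it, with nondegeneracy coming directly from the preserved vertex--facet incidences.
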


\begin{proof}
For each facet \(F\) of \(P\), choose a triangulation using only vertices of \(P\). Let \(\Gamma\) denote
the collection of all vertex-index triples
\[
(i,j,k)
\]
which occur as triangles in these triangulations of the facets of \(P\).
Since the labeled face lattice is preserved, the same collection \(\Gamma\) gives triangulations of the corresponding facets of \(P_t\).

Denote
\[
        p(t)=\frac1V\sum_{i=1}^V x_i(t).
\]
This point belongs to \(\operatorname{int}P_t\).  Indeed, every facet of \(P_t\) misses at least one vertex, 
and \(p(t)\) is a convex combination of all vertices with positive coefficients. Hence
\[p(t)\cdot v_l(t) < h_{P_t}(v_l(t)),\]
for every outer normal $v_l(t)$ of $P_t$. 

Consider the tetrahedra obtained by joining \(p(t)\) to the triangles in the triangulation \(\Gamma\):
\[
\conv\, \{p(t),x_i(t),x_j(t),x_k(t)\},
\qquad (i,j,k)\in\Gamma.
\]
These tetrahedra cover \(P_t\) and have disjoint interiors. Therefore
\[
        |P_t|
        =
        \frac16
        \sum_{(i,j,k)\in\Gamma}
        \left|
        \det\bigl(
             x_i(t)-p(t),
             x_j(t)-p(t),
             x_k(t)-p(t)
        \bigr)
        \right|.
\]

Put
\[
    \bar\alpha=\frac1V\sum_{r=1}^V\alpha_r .
\]
Then
\[
    p(t)=p(0)+t\bar\alpha\theta,
\]
and hence
\[
x_i(t)-p(t)
=
x_i-p(0)+t(\alpha_i-\bar\alpha)\theta .
\]
Thus each column in the determinant has a constant part plus a multiple of the fixed direction \(\theta\).

For each index triple \((i,j,k) \in \Gamma\), define
\[
        D_{ijk}(t)
        =
        \det\bigl(
             x_i(t)-p(t),
             x_j(t)-p(t),
             x_k(t)-p(t)
        \bigr).
\]
By multilinearity of the determinant, \(D_{ijk}(t)\) is a polynomial in \(t\).  Every term of degree at least two contains at least two columns proportional to \(\theta\), and therefore vanishes. Hence \(D_{ijk}(t)\) is affine in \(t\).

Moreover, \(D_{ijk}(t)\neq0\) for every \(t\in I:=[-c, c]\), because \(p(t)\) is an interior point of \(P_t\) and the triangle \(\conv\{x_i(t),x_j(t),x_k(t)\}\) lies in a boundary facet.
Since \(I\) is connected, the sign of \(D_{ijk}(t)\) is constant on \(I\). Therefore
\[
        |D_{ijk}(t)|
\]
is also affine in \(t\). Summing over the fixed finite set \(\Gamma\), we conclude that \(t\mapsto |P_t|\) is affine on \(I\).
\end{proof}

The following proposition shows that the shadow flows are in fact volume-affine. 

\begin{proposition}\label{prop:face-affine-shadow}
Let $P\subset\R^3$ be a convex polytope, and let  $\alpha \in A_\theta(P)$. Then there is $c>0$ such that the shadow flow $(P_t)_{t\in[-c,c]}$ is  non-degenerate and preserves the face lattice, and such that $t\mapsto |P_t|$ is affine on $[-c,c]$. 
\end{proposition}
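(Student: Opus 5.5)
The proposition will follow by combining Lemma~\ref{lem:facet-persistence} with Lemma~\ref{lem:volume-affine}, together with a short argument for non-degeneracy. First, since $\alpha\in A_\theta(P)$, Lemma~\ref{lem:facet-persistence} gives some $c_1>0$ such that the shadow flow $(P_t)_{t\in[-c_1,c_1]}$ preserves the labeled face lattice. We will then shrink this interval to some $c\in(0,c_1]$ on which non-degeneracy also holds. Restricting a face-lattice-preserving family to a smaller symmetric interval clearly still preserves the face lattice, so that property survives the shrinking.

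For non-degeneracy we need each $P_t$ to have non-empty interior. The cleanest route is to fix four affinely independent vertices $x_{i_0},\dots,x_{i_3}$ of $P$, which exist because $P$ is three-dimensional. Consider
\[
\det\bigl(x_{i_1}(t)-x_{i_0}(t),\,x_{i_2}(t)-x_{i_0}(t),\,x_{i_3}(t)-x_{i_0}(t)\bigr).
\]
This is continuous in $t$ and nonzero at $t=0$. In fact, by the same multilinearity argument as in Lemma~\ref{lem:volume-affine}, it is affine in $t$. Hence it stays nonzero for $|t|\le c$ with $c$ small, so $P_t$ contains a non-degenerate tetrahedron and is full-dimensional. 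Alternatively, face-lattice preservation already forces $P_t$ to have facets $F_k(t)$ with linearly spanning normals $v_k(t)$, and the interior point $p(t)$ found in the proof of Lemma~\ref{lem:volume-affine} shows $\intt P_t\neq\varnothing$. Either argument suffices, and I would present the determinant one as the self-contained choice.

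Finally, with $c$ chosen as above, the hypothesis of Lemma~\ref{lem:volume-affine} holds on $[-c,c]$, so $t\mapsto|P_t|$ is affine there. I do not expect a real obstacle. The only point needing care is consistency: the interior-point argument inside Lemma~\ref{lem:volume-affine} implicitly uses that $P_t$ is a genuine three-dimensional polytope whose facets are exactly the $F_k(t)$. That is precisely what Lemma~\ref{lem:facet-persistence} together with the non-degeneracy step supplies, so the constants must be chosen in the order described: first $c_1$ from face-lattice persistence, then the smaller $c$ from non-degeneracy.
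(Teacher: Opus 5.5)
Your proposal is correct and follows the paper's proof: it combines Lemma~\ref{lem:facet-persistence} with Lemma~\ref{lem:volume-affine}. The only difference is in the non-degeneracy step. The paper reads non-degeneracy off directly from face-lattice preservation, since the $P_t$ keep their two-dimensional facets and are therefore full-dimensional. You instead add an explicit check that a determinant on four affinely independent vertices is affine in $t$ and nonzero; this is valid but not needed.
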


\begin{proof}
By Lemma \ref{lem:facet-persistence}, the shadow flow $(P_t)_{t\in[-c,c]}$  preserves the face lattice; in particular the bodies remain full-dimensional, so the shadow flow is non-degenerate. Lemma \ref{lem:volume-affine} gives the affine dependence of the volume. 
\end{proof}

\begin{remark} \label{rem:flow}
One may imagine successively applying short-time shadow flows to the polytope itself or to its Santal\'o polar, restarting the process whenever the face lattice changes. This should lead to a terminal body, namely a polytope for which both the body and its Santal\'o polar admit only trivial admissible speeds. In the present paper, however, we do not carry out this forward construction explicitly; instead, we prove directly that any minimizer of the Mahler volume (see Section \ref{Sec:tools} for definition) must already be a terminal body.
This is shown in Section \ref{sec:lem:local-exclusion}.

The  conceptual point of the shadow flow is that we do not seek an explicit geometric deformation that sends a polytope directly to a prescribed singular body, such as a simplex, a cube, an octahedron, or generally a Hanner polytope. 
Rather, at each stage it is governed by the admissible-speed structure of the current polytope. 
This makes it possible to start from an arbitrary polytope and continue the process until a terminal structure is reached, thereby overcoming the traditional restriction of shadow-system arguments to special cases.
In three dimensions, the possible terminal structures are then identified by counting arguments. In the general form, Section \ref{sec:combinatorial} shows that terminal polytopes must be simplices. In the origin-symmetric form, Section \ref{sec:final} shows that terminal polytopes must be affine images of cubes or octahedra. 

Most of the results established for shadow flows, except the determination of the terminal bodies, also work in $\R^n$.
\end{remark}

\section{Minimizers admit only trivial shadow flows}\label{sec:lem:local-exclusion}

Recall that we call \((P_t)_{t\in[-c,c]}\) a {shadow flow}, if it is a shadow system along the direction
\(\theta\in \sn\) with the speed vector
\(\alpha\in\mathbb R^V\) $\theta$-admissible. A shadow flow, with its underlying direction \(\theta\) understood, is called non-trivial or trivial, if its speed vector is non-trivial or trivial, respectively.

The main purpose of this section is to establish
Lemma \ref{lem:minimizer-exclusion}; 
for this, we first need the following local version.

\begin{lemma} \label{lem:local-exclusion}
Let  
\[        
P_t=\conv\{x_i+t\alpha_i\theta:1\leq i\leq V\},\qquad t\in[-c,c], 
\]
be a shadow flow, where $c>0$ is sufficiently small so that Proposition \ref{prop:face-affine-shadow} applies. If
\[
        \VP(P_0)=\min_{t\in[-c,c]}\VP(P_t),
\]
then the speed vector $\alpha$ is trivial. 
\end{lemma}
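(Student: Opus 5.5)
We combine Proposition \ref{prop:face-affine-shadow}, Lemma \ref{simplified} and the rigidity part of Theorem \ref{thm:shadow}. Since $c$ is small enough for Proposition \ref{prop:face-affine-shadow} to apply, the flow $(P_t)_{t\in[-c,c]}$ is a non-degenerate shadow system along $\theta$, it preserves the labeled face lattice, and $t\mapsto|P_t|$ is affine. Together with the minimality hypothesis at $t=0$, these are exactly the assumptions of Corollary \ref{cor:one-sided}. Lemma \ref{simplified} then shows that $\VP(P_t)$ is constant on all of $[-c,c]$. The second part of Theorem \ref{thm:shadow} applies, and gives $w\in\R^3$ and $\beta\in\R$ with $P_t=A_t(P_0)$ for every $t$, where $A_t(x)=x+t(w\cdot x+\beta)\theta$.

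Next we identify the vertices. For each $t$, the map $A_t$ is an affine map, and it is invertible for $|t|$ small (its determinant is $1+t\,w\cdot\theta$). So the vertices of $P_t$ are exactly the points $A_t(x_i)=x_i+t(w\cdot x_i+\beta)\theta$, $1\le i\le V$. On the other hand, by Lemma \ref{lem:facet-persistence} the vertices of $P_t$ are exactly the points $x_i(t)=x_i+t\alpha_i\theta$, and the labeling is continuous in $t$. Hence, for each fixed $t\neq0$, the two finite sets $\{x_i+t\alpha_i\theta\}$ and $\{x_i+t(w\cdot x_i+\beta)\theta\}$ coincide.

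The main obstacle is to match the labels. A priori, set equality only gives a permutation $\sigma_t$ of $\{1,\dots,V\}$ with $x_i+t\alpha_i\theta=x_{\sigma_t(i)}+t(w\cdot x_{\sigma_t(i)}+\beta)\theta$. Since the $x_i$ are distinct, set $\delta=\min_{i\ne j}|x_i-x_j|>0$ and let $M=\max_i\bigl(|\alpha_i|+|w\cdot x_i+\beta|\bigr)$. If $|t|M<\delta$, then $\sigma_t(i)\neq i$ would force $|x_i-x_{\sigma_t(i)}|\le |t|M<\delta$, a contradiction. So $\sigma_t=\mathrm{id}$ for small $t\ne0$. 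Comparing coefficients of $\theta$ then gives $t\alpha_i=t(w\cdot x_i+\beta)$, that is, $\alpha_i=w\cdot x_i+\beta$ for all $i$. This means $\alpha\in\mathcal T(P)$, so $\alpha$ is trivial.
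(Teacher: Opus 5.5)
Your proof is correct and follows the paper's route exactly. Proposition \ref{prop:face-affine-shadow} and Lemma \ref{simplified} give constancy of $\VP(P_t)$, the rigidity part of Theorem \ref{thm:shadow} gives $P_t=A_t(P_0)$, and matching vertices yields $\alpha_i=w\cdot x_i+\beta$; your explicit permutation argument is a more careful version of the paper's one-line appeal to continuity. One harmless slip: with $M$ as you defined it, the estimate is $|x_i-x_{\sigma_t(i)}|\le |t|\bigl(|\alpha_i|+|w\cdot x_{\sigma_t(i)}+\beta|\bigr)\le 2|t|M$, so you should require $2|t|M<\delta$ instead.
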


\begin{proof}
By Proposition~\ref{prop:face-affine-shadow}, the family
\((P_t)_{t\in[-c,c]}\) is a non-degenerate volume-affine shadow flow, and \(P_t\) preserves the face lattice for all \(t\in[-c,c]\). Since
\[
\VP(P_0)=\min_{t\in[-c,c]}\VP(P_t),
\]
Lemma~\ref{simplified} implies that \(\VP(P_t)\) is constant on \([-c,c]\).
Then, by Theorem~\ref{thm:shadow}, there exists an affine map
\(A_t:\mathbb R^3\to\mathbb R^3\) of the form
\[
A_t(x)=x+t(w\cdot x+\beta)\theta
\]
such that
\[
        P_t=A_t(P_0).
\]
By the continuity in $t$ of both $A_t$ and the vertices $x_i(t) = x_i+t\alpha_i \theta$, we must have 
\[ A_t(x_i) = x_i(t),\]
when $|t|>0$ is sufficiently small.
Hence the speed vector \((w\cdot x_i+\beta)_{1\leq i\leq V}\) is globally affine.

\end{proof}

By Lemma \ref{lem:bounded-class}, $\mathcal P(\cdot)$ attains its infimum in $\mathcal C_N$.
The following lemma shows that a minimizer of $\mathcal P$ on $\mathcal C_N$ can admit only a trivial shadow flow. 

\begin{lemma} \label{lem:minimizer-exclusion}
Let $\mathcal C_N$ denote the class of full-dimensional
convex polytopes in $\mathbb{R}^3$ with at most $N$ vertices. Fix $N\geq4$, and let $Q\in\mathcal C_N$ be a minimizer of $\VP$ over $\mathcal C_N$. Then neither $Q$ nor its Santal\'o polar $Q^{s(Q)}$ admits a non-trivial shadow flow along any direction $\theta\in \sn$.
\end{lemma}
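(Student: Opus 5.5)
The plan is to reduce both assertions to Lemma~\ref{lem:local-exclusion}. The key point is that a shadow flow does not increase the number of vertices, so every $P_t$ stays in the class $\mathcal C_N$ where $Q$ is a minimizer. For the polar body we will additionally need Lemma~\ref{lem:polar-nonincreasing} and the vertex--facet duality of Lemma~\ref{lem:polarity-face-lattice}.

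\emph{The body $Q$.} Let $\theta\in\sn$ and $\alpha\in A_\theta(Q)$, and let $(Q_t)_{t\in[-c,c]}$ be the associated shadow flow, with $c$ small enough that Proposition~\ref{prop:face-affine-shadow} applies. Each $Q_t=\conv\{x_i+t\alpha_i\theta\}$ is full-dimensional, since the flow is non-degenerate, and has at most $V\le N$ vertices. Hence $Q_t\in\mathcal C_N$, and so $\VP(Q_t)\ge \VP(Q)=\VP(Q_0)$. Lemma~\ref{lem:local-exclusion} then shows that $\alpha$ is trivial.

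\emph{The polar $Q^*:=Q^{s(Q)}$.} By Lemma~\ref{lem:polarity-face-lattice}, $V(Q^*)=F(Q)$. This number need not be at most $N$, so $Q^*$ need not lie in $\mathcal C_N$, and this is the main obstacle. I would get around it by polarizing the flow back. Let $(R_t)$ be a shadow flow of $R_0=Q^*$. By Proposition~\ref{prop:face-affine-shadow} it preserves the face lattice for small $|t|$, so
\[
V(R_t)=V(Q^*)=F(Q).
\]
Applying Lemma~\ref{lem:polarity-face-lattice} again to the Santal\'o polar gives
\[
V\bigl(R_t^{s(R_t)}\bigr)=F(R_t)=F(Q^*)=V(Q)\le N,
\]
so $R_t^{s(R_t)}\in\mathcal C_N$. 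Combining minimality of $Q$ with Lemma~\ref{lem:polar-nonincreasing}, we obtain
\[
\VP(R_t)\ \ge\ \VP\bigl(R_t^{s(R_t)}\bigr)\ \ge\ \VP(Q).
\]
We also have $\VP(Q)\ge\VP(Q^*)$ by Lemma~\ref{lem:polar-nonincreasing} applied to $Q$. Hence $\VP(R_t)\ge \VP(Q^*)=\VP(R_0)$ for all $t\in[-c,c]$. (In fact this shows $\VP(Q^*)=\VP(Q)$.) Lemma~\ref{lem:local-exclusion} applied to $(R_t)$ now gives that its speed is trivial.

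The delicate step is the count $F(R_t)=F(Q^*)$. It relies on the face-lattice persistence of Lemma~\ref{lem:facet-persistence}: merely knowing that $R_t$ is the convex hull of $F(Q)$ points would control only its vertices, not its facets. Persistence is what guarantees that no new facets appear for small $|t|$. I would also check that Lemma~\ref{lem:polarity-face-lattice} may be applied at the Santal\'o point $s(R_t)\in\intt R_t$, which holds because the flow is non-degenerate.
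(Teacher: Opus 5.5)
Your proposal is correct and follows the paper's proof essentially step for step. For $Q$ you place $Q_t$ in $\mathcal C_N$ by counting it as a convex hull of $V\le N$ points, whereas the paper uses face-lattice persistence; this difference does not matter. For $Q^{s(Q)}$ you polarize the flow back using $V(R_t^{s(R_t)})=F(R_t)=F(Q^{s(Q)})=V(Q)\le N$ and the chain $\VP(R_0)\le\VP(Q)\le\VP(R_t^{s(R_t)})\le\VP(R_t)$, and then apply Lemma~\ref{lem:local-exclusion}, exactly as in the paper.
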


\begin{proof}
First suppose that $Q$ admits such a shadow flow $Q_t$ along some direction $\theta\in \mathbb{S}^2$. By Proposition \ref{prop:face-affine-shadow}, after shrinking the interval the face lattice is preserved. Hence $Q_t$ has the same number of vertices as $Q$, and $Q_t\in\mathcal C_N$ for small $|t|$. By minimality,
\[
    \VP(Q_t)\geq \VP(Q)=\VP(Q_0).
\]
Thus $Q_0$ is an interior minimum along the shadow flow. Lemma \ref{lem:local-exclusion} forces the speed to be globally affine, contradicting non-triviality.

Now put $L=Q^{s(Q)}$, and suppose that $L$ admits a non-trivial shadow flow $L_t$. Again shrink the interval so that Proposition \ref{prop:face-affine-shadow} applies. We claim that $L_0$ is an interior minimum along the shadow flow $L_t.$ 

By Lemma \ref{lem:polarity-face-lattice}, polarity with respect to  an interior point, in particular with respect to the Santal\'o point, reverses the face lattice. 
Since the face lattice of $L_t$ is preserved for sufficiently small $t$, we have
\[
        V(L_t^{s(L_t)})=F(L_t)=F(L_0)=V(Q)\leq N,
\]
and hence
\begin{equation}\label{eq:Mt-in-CN}
        L_t^{s(L_t)}\in\mathcal C_N\qquad\text{for all small }t.
\end{equation}
Since $Q\in \mathcal C_N$ minimizes $\mathcal P$ over $\mathcal C_N$,
\begin{equation}\label{eq:minimality-Q-Mt}
        \VP(Q)\leq \VP(L_t^{s(L_t)}).
\end{equation}

By Lemma \ref{lem:polar-nonincreasing}, applied to $L_t$, we have
\begin{equation}\label{eq:Mt-less-Lt}
        \VP(L_t^{s(L_t)})\leq \VP(L_t).
\end{equation}
Recalling that $L_0 = Q^{s(Q)}$, by \eqref{eq:minimality-Q-Mt} and \eqref{eq:Mt-less-Lt}, and Lemma \ref{lem:polar-nonincreasing} applied to $Q$, we obtain
\begin{equation}\label{eq:chain-L}
        \VP(L_0)\leq \VP(Q)\leq \VP(L_t).
\end{equation}
 Thus $L_0$ is an interior minimum along the shadow flow $L_t$. Now Lemma \ref{lem:local-exclusion} forces the speed of $L_t$ to be trivial, again contradicting the assumption that $L_t$ is a non-trivial shadow flow.
\end{proof}

\section{A counting argument in  three dimensions}\label{sec:combinatorial}

Let $P$ be a three-dimensional polytope. In this section, let 
\[
        V=V(P),\qquad E=E(P),\qquad F=F(P)
\]
denote the numbers of vertices, edges, and facets. 
Let
\[
        \Delta(P)=\max\{\#\Phi_0(k):\, k\in \mathcal I_2\}
\]
denote the maximum number of vertices of a facet of $P.$
Let
\[
        d(P)=\max\{\#\Phi_1(i):\, i\in \mathcal I_0\}
\]
denote the maximum number of edges incident to a vertex of $P.$
In three dimensions, $\#\Phi_1(i)$ is also the number of facets incident to $x_i$, because the vertex figure at $x_i$ is a polygon whose edges correspond to incident facets.

The well-known Euler's formula reads 
\[ V+F-E=2.\]
We shall derive the following counting lemma, which relates the dimension of the linear space of admissible speeds to the quantities 
\(V\), \(F\), and \(\Delta(P)\).

\begin{lemma}\label{lem:dimension-count}
Let \(P\) be a three-dimensional convex polytope. Choose a facet \(G_0\) with \(\Delta(P)\) vertices, and choose a direction
\[
    \theta \in \lin(G_0-G_0)\cap \sn .
\]
Let \(A_\theta(P)\) be the vector space of \(\theta\)-admissible speeds. Then
\begin{equation}\label{adimest}
        \dim A_\theta(P) \ge F(P)-V(P)+\Delta(P)+1.       
\end{equation}
Consequently, if $\dim A_\theta(P) = 4$, then
\begin{equation}\label{detaplower}
        \Delta(P)\le V(P)-F(P)+3.      
\end{equation}
\end{lemma}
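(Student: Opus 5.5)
A_\theta(P) is the solution space of a homogeneous linear system in the $V$ unknowns $\alpha_1,\dots,\alpha_V$, so I will count constraints. The facet $G_0$ satisfies $\theta\in\lin(G_0-G_0)$, so Case (2) of Definition~\ref{def:admissible} applies to it and it contributes no constraint. I will bound the number of independent constraints from each facet $F_k\neq G_0$ by $\#\Phi_0(k)-3$. Requiring that the values $\alpha_i$ at the $m_k=\#\Phi_0(k)$ vertices of $F_k$ agree with an affine function on the plane $\aff F_k$ amounts to $m_k-3$ linear conditions: fix three affinely independent vertices of $F_k$, which determine a unique affine $\ell_{F_k}$, and then impose $\alpha_i=\ell_{F_k}(x_i)$ at each of the remaining $m_k-3$ vertices. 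Facets parallel to $\theta$ impose no conditions, which only helps. Hence
\[
\dim A_\theta(P)\ \ge\ V-\sum_{k\neq G_0}(m_k-3).
\]

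Next I will evaluate the sum by double counting. Each edge lies in exactly two facets and each facet polygon has as many edges as vertices, so $\sum_k m_k=2E$. Therefore
\[
\sum_{k\neq G_0}(m_k-3)=2E-\Delta(P)-3(F-1).
\]
Euler's formula gives $E=V+F-2$, so the sum equals $2V+2F-4-\Delta(P)-3F+3=2V-F-\Delta(P)-1$. Substituting,
\[
\dim A_\theta(P)\ \ge\ V-(2V-F-\Delta(P)-1)=F-V+\Delta(P)+1,
\]
which is \eqref{adimest}. For the consequence, if $\dim A_\theta(P)=4$, then \eqref{adimest} gives $4\ge F-V+\Delta+1$, i.e.\ $\Delta(P)\le V-F+3$, which is \eqref{detaplower}.

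The only step needing care is the constraint count: I must justify that each facet imposes at most $m_k-3$ independent linear conditions, rather than overcounting or requiring independence across facets. Only an upper bound on the total number of conditions is needed, and the rank of a union of constraint sets is at most the sum of their sizes. So it suffices to exhibit, for each facet, a set of $m_k-3$ linear equations whose solution set is exactly the admissibility condition for that facet, as described above. Everything else is routine arithmetic with Euler's formula.
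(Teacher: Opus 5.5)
Your proposal is correct and follows essentially the same route as the paper. The paper encodes each facet's condition as membership in the image of the evaluation map $\operatorname{ev}_G$, of codimension $m(G)-3$, and applies rank--nullity to get $\dim A_\theta(P)\ge V-\sum_{G\in\mathcal F_\theta}(m(G)-3)$; this is exactly your count of $m_k-3$ explicit equations per facet, as the paper's own follow-up remark notes. It then discards $G_0$ and uses $\sum_G m(G)=2E$ together with Euler's formula, just as you do.
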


\begin{proof}
Write
\[
        V=V(P),\qquad E=E(P),\qquad F=F(P).
\]
Let the vertices of \(P\) be \(x_1,\ldots,x_V\). For a facet \(G\), let
\[
        I(G)=\{i:x_i\in G\},
        \qquad
        m(G)=\#I(G).
\]
Thus \(m(G)\) is the number of vertices of \(G\).

We first describe explicitly the linear conditions imposed by one facet. Let
\[
       \A (\aff G)
\]
denote the vector space of real-valued affine functions on the affine plane \(\aff G\). Since \(\aff G\) is two-dimensional,
\[
        \dim \A(\aff G)=3.
\]
Consider the evaluation map
\[
        \operatorname{ev}_G:\A(\aff G)\longrightarrow \mathbb R^{m(G)},
        \qquad
        \ell\longmapsto \bigl(\ell(x_i)\bigr)_{i\in I(G)}.
\]
This map is injective. Indeed, if an affine function on \(\aff G\) vanishes at all vertices of \(G\), then in particular it vanishes at three non-collinear vertices of \(G\). An affine function on a plane which vanishes at three non-collinear points is identically zero. Therefore
\[
        \dim \operatorname{im}(\operatorname{ev}_G)=3.
\]
Hence the subspace
\[
        \operatorname{im}(\operatorname{ev}_G)\subset \mathbb R^{m(G)}
\]
has codimension
\[
        m(G)-3.
\]

Now let \(\alpha=(\alpha_1,\ldots,\alpha_V)\in\mathbb R^V\) be an admissible speed vector. Its restriction to the vertices of \(G\) is
\[
        \alpha|_G=(\alpha_i)_{i\in I(G)}\in \mathbb R^{m(G)}.
\]
If
\[
        \theta\notin \lin(G-G),
\]
then the definition of \(\theta\)-admissibility says exactly that there exists an affine function \(\ell_G\in \A(\aff G)\) such that
\[
        \ell_G(x_i)=\alpha_i,
        \qquad i\in I(G).
\]
Equivalently,
\[
        \alpha|_G\in \operatorname{im}(\operatorname{ev}_G).
\]

If instead
\[
        \theta\in \lin(G-G),
\]
then \(G\) imposes no constraint by the definition of admissibility.

Let
\[
        \mathcal F_\theta
        =
        \{G\in \mathcal F:\theta\notin \lin(G-G)\}
\]
be the set of facets not parallel to \(\theta\). Define the linear map
\[
        T:\mathbb R^V
        \longrightarrow
        \bigoplus_{G\in\mathcal F_\theta}
        \mathbb R^{m(G)}/\operatorname{im}(\operatorname{ev}_G)
\]
by
\[
        T(\alpha)
        =
        \bigl(\alpha|_G+\operatorname{im}(\operatorname{ev}_G)\bigr)_{G\in\mathcal F_\theta}.
\]
By construction,
\[
        A_\theta(P)=\ker T.
\]
Therefore, by rank-nullity,
\[
        \dim A_\theta(P)
        =
        V-\operatorname{rank} T.
\]
Since the rank of \(T\) is at most the dimension of its target space,
\[
        \operatorname{rank} T
        \le
        \sum_{G\in\mathcal F_\theta}
        \bigl(m(G)-3\bigr).
\]
Consequently,
\begin{equation}\label{dimest}
        \dim A_\theta(P)
        \ge
        V-\sum_{G\in\mathcal F_\theta}
        \bigl(m(G)-3\bigr).             
\end{equation}

It remains to estimate the sum in \eqref{dimest}. Since
\[
        \theta\in \lin(G_0-G_0),
\]
the chosen facet \(G_0\) is parallel to \(\theta\), and hence \(G_0\notin \mathcal F_\theta\). Since \(m(G_0)=\Delta(P)\), we get
\begin{equation}\label{mgupbound}
        \sum_{G\in\mathcal F_\theta}\bigl(m(G)-3\bigr)
        \le
        \sum_{G\text{ facet of }P}\bigl(m(G)-3\bigr)
        -\bigl(\Delta(P)-3\bigr).                   
\end{equation}

Now compute the full facet sum. Each facet \(G\) is a polygon with \(m(G)\) vertices and \(m(G)\) edges. Therefore
\[
        \sum_{G\text{ facet of }P} m(G)=2E,
\]
because every edge of a three-dimensional polytope belongs to exactly two
facets. Hence
\begin{equation}\label{eulerest}
        \sum_{G\text{ facet of }P}\bigl(m(G)-3\bigr)
        =
        2E-3F.                                     
\end{equation}

Substituting the Euler's formula into \eqref{eulerest} gives
\begin{equation}\label{2e3fest}
        2E-3F
        =
        2(V+F-2)-3F
        =
        2V-F-4.                                     
\end{equation}
Combining \eqref{mgupbound} and \eqref{2e3fest}, we obtain
\begin{equation}\label{sumg}
        \sum_{G\in\mathcal F_\theta}\bigl(m(G)-3\bigr)
        \le
        (2V-F-4)-(\Delta(P)-3)
        =
        2V-F-\Delta(P)-1.                          
\end{equation}
Putting \eqref{sumg} into \eqref{dimest}, we get
\[
        \dim A_\theta(P)
        \ge
        V-\bigl(2V-F-\Delta(P)-1\bigr)
        =
        F-V+\Delta(P)+1.
\]
This proves \eqref{adimest}.

 Finally, when $\dim A_\theta(P)=4$, \eqref{detaplower} is immediate.
\end{proof}

\begin{remark} The inequality \eqref{dimest} can also be understood directly: on each facet  $G\in \F_\theta$, there are $(m(G)-3)$ constraints, since the speeds at any triple of  vertices determine the affine function $\ell_G$ on the plane $\aff G$.

\end{remark}

The following lemma implies that a three-dimensional convex polytope with only trivial shadow flows must be a tetrahedron.

\begin{lemma}\label{lem:combinatorial-alternative}
Let $P$ be a three-dimensional convex polytope. 
If both  $P$ and its Santal\'o polar $P^{s(P)}$ admit only trivial shadow flows in every direction $\theta$,  then  $P$ must be a  tetrahedron. 
\end{lemma}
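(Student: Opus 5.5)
The plan is to apply Lemma~\ref{lem:dimension-count} to both $P$ and $P^* := P^{s(P)}$, and then combine the two resulting inequalities with polar duality and elementary facts about 3-polytopes.

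First, "only trivial shadow flows in every direction" means $A_\theta(P)=\mathcal T(P)$ for every $\theta$. Since $\mathcal T(P)\subset A_\theta(P)$ and $\dim\mathcal T(P)=4$, this gives $\dim A_\theta(P)=4$. We take $\theta$ parallel to a facet $G_0$ of $P$ with $\Delta(P)$ vertices. Inequality \eqref{detaplower} then yields
\[
\Delta(P)\le V(P)-F(P)+3 .
\]
The same argument applied to $P^*$ yields
\[
\Delta(P^*)\le V(P^*)-F(P^*)+3 .
\]
By Lemma~\ref{lem:polarity-face-lattice} we have $V(P^*)=F(P)$ and $F(P^*)=V(P)$. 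Moreover, facets of $P^*$ correspond to vertices of $P$. The number of vertices of the facet dual to $x_i$ equals the number of facets of $P$ incident to $x_i$, which is $\#\Phi_1(i)$. Hence $\Delta(P^*)=d(P)$, and the second inequality becomes
\[
d(P)\le F(P)-V(P)+3 .
\]

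Adding the two inequalities gives $\Delta(P)+d(P)\le 6$. Every facet has at least $3$ vertices and every vertex of a 3-polytope has degree at least $3$, so $\Delta(P)\ge 3$ and $d(P)\ge 3$. Therefore $\Delta(P)=d(P)=3$. Substituting back into the two inequalities gives $V(P)\ge F(P)$ and $F(P)\ge V(P)$, so $V=F$.

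It remains to identify $P$. Since every facet is a triangle, $2E=3F$. Since every vertex has degree $3$, $2E=3V$. Euler's formula then gives $V-\tfrac32 V+V=2$, so $V=F=4$. A 3-polytope with four vertices is a tetrahedron.

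The only step needing care is the identification $\Delta(P^*)=d(P)$ via the face-lattice duality. This is exactly the remark in Section~\ref{sec:combinatorial} that $\#\Phi_1(i)$ also counts the facets at $x_i$, together with the fact that polarity reverses inclusion, so the vertices of the dual facet of $x_i$ are the duals of the facets containing $x_i$. I expect no real obstacle beyond stating this duality cleanly. One should also note that Lemma~\ref{lem:dimension-count} applies to $P^*$ because $P^*$ is a three-dimensional polytope by Lemma~\ref{lem:polarity-face-lattice}.
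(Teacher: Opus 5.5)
Your proposal is correct and follows the paper's proof essentially step by step: apply Lemma~\ref{lem:dimension-count} to $P$ and to $P^{s(P)}$, use the duality $V(P^{s(P)})=F(P)$, $F(P^{s(P)})=V(P)$, $\Delta(P^{s(P)})=d(P)$ to get $\Delta(P)=d(P)=3$ and $V=F$, and then conclude $V=4$ from Euler's formula. You also explicitly choose a separate direction parallel to a largest facet of $P^{s(P)}$, which is legitimate because the hypothesis holds in every direction; this is slightly cleaner than the paper's wording, which reuses the same $\theta$ for both bodies.
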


\begin{proof} Choose a facet \(G_0\) with \(\Delta(P)\) vertices, and choose a  direction
\[
        \theta \in \lin(G_0-G_0)\cap \sn.
\]
Since $P$ and $P^{s(P)}$ admit only trivial shadow flows in the direction $\theta$, and recall from Definition \ref{def:trivial-speeds} that $\dim \mathcal T(P) = 4$, we have
\[ \dim A_\theta(P) =  \dim A_\theta(P^{s(P)}) = 4.\]

By Lemma \ref{lem:dimension-count}, we have
\begin{equation}\label{eq:A}
        \Delta(P) \le V(P)-F(P)+3.
\end{equation}
By Lemma \ref{lem:polarity-face-lattice}, the face lattice of $P^{s(P)}$ is dual to that of $P$, so
\[
        V(P^{s(P)})=F(P),\qquad F(P^{s(P)})=V(P).
\]
A facet of $P^{s(P)}$ corresponding to a vertex $v$ of $P$ has as many vertices as the number of facets of $P$ incident to $v$. Therefore
\begin{equation}\label{eq:polar-delta-degree}
        \Delta(P^{s(P)})=d(P).
\end{equation}
Applying Lemma \ref{lem:dimension-count} to polytope $P^{s(P)}$, we get 
\begin{equation}\label{eq:B}
        d(P) \le F(P)-V(P)+3.
\end{equation} 

Note that every facet has at least three vertices and every vertex has degree at least three. Combining these with \eqref{eq:A} and \eqref{eq:B}, we infer that  
   \[  \Delta(P) = d(P) = 3 \qquad {\rm and}  \qquad   V=F.\] 
 Thus every facet is triangular and every vertex has degree three. Consequently,
\[
        2E=3F,
        \qquad
        2E=3V.
\]
Now Euler's formula yields
\[
        2=V-E+F=2V-\frac32V=\frac12V,
\]
and hence  $V=4$. Therefore $P$ is a tetrahedron.
\end{proof}

\section{Proof of the Main Result}\label{sec:main-theorem}
\subsection{The Inequality Part}\label{sec:main6.1}
\begin{theorem}[The three-dimensional Mahler conjecture for  general convex bodies]\label{thm:main}
Every convex body $K\subset\R^3$ satisfies
\[
        \VP(K)\geq \frac{64}{9}.
\]
\end{theorem}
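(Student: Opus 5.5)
The plan is to reduce to polytopes, minimize over a bounded vertex class, and use the shadow-flow machinery to identify the minimizer as a tetrahedron.

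First, by Hausdorff continuity of $\VP$ (\cite[Lemma 3]{FMZ2012}) and the density of polytopes among convex bodies, it suffices to prove $\VP(P)\ge 64/9$ for every three-dimensional polytope $P$. Since $\VP$ is affine invariant and uses the Santal\'o point, no assumption on the position of the origin is needed. Here we use that $\VP(K)=|K||K^{s(K)}|\le |K||K^\circ|$ for $o\in\intt K$, so the bound for $\VP$ implies the stated bound for $|K||K^\circ|$.

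Next, fix a polytope $P$ and choose $N\ge \max\{4,V(P)\}$, so that $P\in\mathcal C_N$.

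1. By Lemma \ref{lem:bounded-class}, $\VP$ attains its infimum on $\mathcal C_N$ at some polytope $Q$.
2. By Lemma \ref{lem:minimizer-exclusion}, neither $Q$ nor its Santal\'o polar $Q^{s(Q)}$ admits a non-trivial shadow flow in any direction $\theta\in\sn$.
3. Step 2 means exactly that $A_\theta(Q)=\mathcal T(Q)$ and $A_\theta(Q^{s(Q)})=\mathcal T(Q^{s(Q)})$ for all $\theta$. This holds because any admissible speed generates a shadow flow for small $c$, by Proposition \ref{prop:face-affine-shadow}.
4. Lemma \ref{lem:combinatorial-alternative} then forces $Q$ to be a tetrahedron.
5. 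All tetrahedra are affinely equivalent, the Santal\'o point of a simplex is its centroid, and so $\VP(Q)=\VP(\Delta_3)=64/9$.

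Hence $\VP(P)\ge\VP(Q)=64/9$.

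The only delicate point is step 3: matching the hypothesis of Lemma \ref{lem:combinatorial-alternative} with the conclusion of Lemma \ref{lem:minimizer-exclusion}. Admitting only trivial flows means that every $\theta$-admissible speed lies in $\mathcal T$. This is what gives $\dim A_\theta=4$ in the counting argument. The match requires that every admissible speed, restricted to small $|t|$, yields a legitimate non-degenerate shadow flow, which is supplied by Proposition \ref{prop:face-affine-shadow}. The remaining steps, the approximation and the final computation, are routine.
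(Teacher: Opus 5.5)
Your proposal is correct and follows the paper's proof step for step. You minimize $\VP$ over $\mathcal C_N$ (Lemma~\ref{lem:bounded-class}), apply Lemma~\ref{lem:minimizer-exclusion} and then Lemma~\ref{lem:combinatorial-alternative} to force the minimizer to be a tetrahedron, and pass to arbitrary convex bodies by Hausdorff approximation. Your extra remarks are correct bookkeeping that the paper leaves implicit: admitting no non-trivial shadow flow is the same as $A_\theta=\mathcal T$, and $\VP(K)\le |K|\,|K^\circ|$ transfers the bound to the polar about the origin.
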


\begin{proof}
We first prove the assertion for polytopes. Let $N\geq4$, and let $\mathcal C_N$ be the class of three-dimensional convex polytopes with at most $N$ vertices. By Lemma \ref{lem:bounded-class}, there exists a minimizer $Q\in\mathcal C_N$ of $\VP$ over $\mathcal C_N$.

By Lemma \ref{lem:minimizer-exclusion}, $Q$ and $Q^{s(Q)}$ admit only trivial shadow flows in every direction $\theta$. By Lemma \ref{lem:combinatorial-alternative}, $P$ must be a tetrahedron.

Clearly, every tetrahedron (three-dimensional simplex) belongs to $\mathcal C_N$. Since $\VP$ is affine invariant and the simplex value is $64/9$, the minimum of $\VP$ on $\mathcal C_N$ is exactly $64/9$. Thus every polytope with at most $N$ vertices satisfies $\VP(P)\geq64/9$. Since $N$ is arbitrary, the lower bound holds for every three-dimensional convex polytope.

Now let $K\subset\R^3$ be an arbitrary convex body. Choose a sequence $(P_m)_{m\ge1}$ of three-dimensional polytopes such that $P_m\to K$ in the Hausdorff metric as $m\to \infty$. By the polytope case,
\[
        \VP(P_m)\geq\frac{64}{9}
\]
for every $m$. Hausdorff-continuity of the volume product, including the continuity of the Santal\'o point and of the corresponding polar volume, gives
\[
        \VP(K)=\lim_{m\to\infty}\VP(P_m)\geq\frac{64}{9}.\qedhere
\]
\end{proof}

\subsection{The Equality Part} 
\label{sec:main6.2}
The equality characterization will follow from a stability result, for which we need the following affine Banach--Mazur distance. Let $K, L \subset \R^3 $ be two convex bodies, their {\it affine Banach-Mazur distance} is given by 
\[
\begin{aligned}
d_{BM}(K,L)
= \inf\big\{c\geq 1 : A(L)\subset B(K)\subset cA(L), \,\,\text{for } A,  B \in \Aff(3) \big\},
\end{aligned}
\]
where $\Aff(3)$ denote the group of invertible affine isomorphisms of $\R^3$.

If both $K$ and $L$ are symmetric convex bodies, this is just the classical Banach--Mazur distance. It
is well known that the Banach–Mazur distance is a multiplicative metric, i.e., that log $d_{BM}$ is a metric.

We begin with a preliminary lemma, which follows from convexity.

 \begin{lemma}\label{sliding lemma}
Let $K\subset \R^3$ be a convex body, and let $z_0=c(K)$ be its centroid. For $0\le \tau\le 1,$ denote
$z_\tau=(1-\tau)\p z_0+\tau \p s(K).
$
Then, 
\[
\mathcal{P}(K^{z_\tau})\le \mathcal{P}(K^{z_0}),\qquad 0\le \tau\le1.
\]
\end{lemma}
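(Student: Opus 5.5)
The plan is to sandwich $\VP(K^{z_\tau})$ between two quantities involving $|K|$. We use the bipolar identity $(K^z)^z=K$ and the duality \eqref{centr-santalo} between centroid and Santal\'o point. Then we apply the convexity of $z\mapsto |K^z|$ on $\intt K$, recalled in Section~\ref{Sec:tools}.

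First we check that the statement makes sense. Both $z_0=c(K)$ and $s(K)$ lie in $\intt K$, and $\intt K$ is convex. Hence every $z_\tau$ lies in $\intt K$, and $K^{z_\tau}$ is a well-defined convex body.

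Next we obtain an upper bound valid for every $z\in\intt K$. By definition $\VP(K^z)=|K^z|\,|(K^z)^{s(K^z)}|$, and $s(K^z)$ minimizes $w\mapsto |(K^z)^w|$ over $\intt K^z$. The point $z$ lies in $\intt K^z$, since $K^z$ contains a neighborhood of $z$. Testing the minimum at $w=z$ and using $(K^z)^z=K$ gives
\[
\VP(K^z)\le |K^z|\,|(K^z)^z| = |K^z|\,|K|.
\]

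For $z=z_0=c(K)$ this is an equality. Indeed, \eqref{centr-santalo} gives $s(K^{c(K)})=c(K)$, so the minimizing point is exactly $z_0$, and therefore
\[
\VP(K^{z_0})=|K^{z_0}|\,|K|.
\]

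It remains to show $|K^{z_\tau}|\le |K^{z_0}|$. We use the convexity of $z\mapsto|K^z|$ on $\intt K$:
\[
|K^{z_\tau}|\le (1-\tau)|K^{z_0}|+\tau|K^{s(K)}|.
\]
Since $s(K)$ minimizes $z\mapsto|K^z|$, we have $|K^{s(K)}|\le|K^{z_0}|$, so the right-hand side is at most $|K^{z_0}|$. Combining the three displays yields
\[
\VP(K^{z_\tau})\le |K^{z_\tau}|\,|K|\le |K^{z_0}|\,|K|=\VP(K^{z_0}).
\]

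There is no serious obstacle here. The only points needing care are that $z$ is an admissible test point in the definition of the Santal\'o point of $K^z$, and that we use the paper's convention for the duality $c(K)=s(K^{c(K)})$.
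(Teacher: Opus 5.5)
Your proof is correct and follows the paper's argument essentially step for step. You bound $\VP(K^{z})\le |K^{z}|\,|K|$ by testing the Santal\'o minimum at $w=z$ and using $(K^z)^z=K$, obtain equality at $z_0$ from $s(K^{c(K)})=c(K)$, and use convexity of $z\mapsto|K^z|$ with its minimum at $s(K)$; your additional checks that $z_\tau\in\intt K$ and $z\in\intt K^z$ are details the paper leaves implicit.
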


\begin{proof}
It is well known that $|K^z|$ is convex with respect to $z.$ Since its minimum is attained at $s(K)$, we have
\[
|K^{z_\tau}|\le (1-\tau)|K^{z_0}|+\tau |K^{s(K)}|\le |K^{z_0}|.
\]
Furthermore, $(K^{z_\tau})^{z_\tau}=K$, and hence
\[
\cP(K^{z_\tau})\le |K^{z_\tau}|\,|(K^{z_\tau})^{z_\tau}|=|K^{z_\tau}|\,|K|\le |K^{z_0}|\,|K|.
\]
By \eqref{centr-santalo}, 
$z_0=s(K^{z_0}).$ Then $\cP(K^{z_0})=|K^{z_0}|\,|(K^{z_0})^{s(K^{z_0})}| = |K^{z_0}| |K|$, which implies $\mathcal{P}(K^{z_\tau})\le \cP(K^z)$. 
\end{proof}
 
For $N\ge 4$, let
\[
  \widetilde{\mathcal C}_N :=\mathcal C_N/\Aff(3).
\]
An element of $\widetilde{\mathcal C}_N$ is written as $[P]$, the affine equivalence class of $P\in\mathcal C_N$. We equip $\widetilde{\mathcal C}_N$ with the metric
\[
  \rho_{BM}([P],[Q]) :=\log d_{BM}(P,Q),
\]
where $d_{BM}$ is the affine Banach--Mazur distance. Under this metric, $\widetilde{\mathcal C}_N$ is compact. Indeed, by choosing John-position representatives, every class has a representative satisfying $B_2^3\subset P\subset 3B_2^3.$ Hausdorff convergence in this normalized class implies convergence in $\rho_{BM}$.

Since $\mathcal P$ is affine invariant, it corresponds to a continuous function on $\widetilde{\mathcal C}_N$, still denoted by $\mathcal P$. For $a\ge 64/9$, define
\[
  \widetilde{\mathcal C}_{N,a}=\{[P]\in \widetilde{\mathcal C}_N:\mathcal P(P)\le a\}.
\]
We aim to show that $\widetilde{\mathcal C}_{N,a}$ is connected for each $a\ge 64/9$. To this end, we first prove the following lemma, which describes how the Mahler volume decreases along a shadow flow.

\begin{lemma}\label{lem:strict-d-flow}
Assume $(P_t)_{t\in[-c,c]}$ is a non-trivial shadow flow, where $c>0$ is sufficiently small so that  $(P_t)_{t\in[-c,c]}$ preserves the face lattices of $P$. Then, 
\[
\text{either }\,\VP(P_{t})<\VP(P) \,\text{ for all }\, t\in [-c,0), \quad \text{   or   }\quad \VP(P_{t})<\VP(P) \,\text{ for all } t\in (0,c].
\]
\end{lemma}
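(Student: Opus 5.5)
We will combine Proposition \ref{prop:face-affine-shadow} with the concavity mechanism already used in the proof of Lemma \ref{simplified}, and then use the rigidity part of Theorem \ref{thm:shadow} to exclude the degenerate alternatives.

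\emph{Setup.} Put $f(t)=|P_t|$ and $h(t)=|P_t^{s(P_t)}|^{-1}$ for $t\in[-c,c]$. Since the face lattice is preserved on $[-c,c]$, Lemma \ref{lem:volume-affine} shows that $f$ is affine and positive. Theorem \ref{thm:shadow} shows that $h$ is convex and positive. Define
\[
D(t)=h(0)f(t)-f(0)h(t).
\]
Then $D$ is concave and $D(0)=0$. Moreover $\VP(P_t)=f(t)/h(t)$, so the sign of $\VP(P_t)-\VP(P)$ equals the sign of $D(t)$.

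\emph{Step 1: $D$ is not identically zero on any interval $[0,\delta]$ or $[-\delta,0]$ with $\delta>0$.} Suppose $D\equiv0$ on, say, $[0,\delta]$. Then $\VP(P_t)$ is constant on $[0,\delta]$. Reparametrize: the family $(P_{\delta/2+s})_{s\in[-\delta/2,\delta/2]}$ is again a non-degenerate shadow system along $\theta$ with affine volume, with base set the vertices $x_i+\frac{\delta}{2}\alpha_i\theta$ and speeds $\alpha_i$. The rigidity part of Theorem \ref{thm:shadow} then gives an affine map with
\[
P_{\delta/2+s}=A_s(P_{\delta/2}),\qquad A_s(x)=x+s(w\cdot x+\beta)\theta .
\]
By the vertex-continuity argument of Lemma \ref{lem:local-exclusion}, using face-lattice persistence to match labeled vertices, we get
\[
\alpha_i=w\cdot x_i(\delta/2)+\beta \quad\text{for all } i .
\]
Since $x_i(\delta/2)=x_i+\frac{\delta}{2}\alpha_i\theta$, this reads
\[
\alpha_i\bigl(1-\tfrac{\delta}{2}\,w\cdot\theta\bigr)=w\cdot x_i+\beta .
\]
For $\delta$ small, so that the factor $1-\frac{\delta}{2}w\cdot\theta$ is nonzero, $\alpha$ is globally affine. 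This contradicts non-triviality.

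The main obstacle is that $w$ may depend on $\delta$, so we cannot simply choose $\delta$ small after the fact. There are two ways around it.
\begin{itemize}
\item If $D\equiv0$ on $[0,\delta]$, then it vanishes on every smaller interval, and applying the argument there gives a uniform contradiction.
\item Alternatively, if $w\cdot\theta=2/\delta$, then $w\cdot x_i+\beta=0$ for all $i$. Since the $x_i$ affinely span $\R^3$, this forces $w=0$, contradicting $w\cdot\theta\neq0$.
\end{itemize}
So in either case $\alpha$ is trivial, which is the desired contradiction.

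\emph{Step 2: conclude from concavity.} A concave function with $D(0)=0$ satisfies $D(t)+D(-t)\le 0$. Hence $D$ cannot be $\ge0$ at two points $t_1<0<t_2$ unless it vanishes on all of $[t_1,t_2]$, which Step 1 forbids. Indeed, concavity gives $D\ge0$ on $[t_1,t_2]$, and then $D(t)+D(-t)\le0$ gives $D\equiv0$ on a symmetric neighborhood of $0$.

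Also, the zero set of $D$ on each side of $0$ is an interval containing $0$. If $D(t_2)=0$ for some $t_2>0$, then $D\ge0$ on $[0,t_2]$. Any $s<0$ with $D(s)\ge0$ would give the forbidden situation, so $D<0$ on $[-c,0)$. Otherwise $D\neq0$ on $(0,c]$, so by continuity $D$ has a constant sign there. If that sign is negative we are done; if positive, again $D<0$ on $[-c,0)$.

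Hence $D<0$ on $[-c,0)$ or on $(0,c]$. Translating back via $\VP(P_t)=f(t)/h(t)$, this gives $\VP(P_t)<\VP(P)$ on the corresponding half-interval, which is the statement of Lemma \ref{lem:strict-d-flow}.
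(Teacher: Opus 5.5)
Your proof is correct and takes the same route as the paper. It uses concavity of $D=h(0)f-f(0)h$, together with the rigidity part of Theorem~\ref{thm:shadow} to exclude $D\equiv 0$ on a one-sided interval; your Step~1 makes explicit what the paper leaves implicit, namely the recentering to $[-\delta/2,\delta/2]$ and the identity $\alpha_i(1-\tfrac{\delta}{2}w\cdot\theta)=w\cdot x_i+\beta$, and your second bullet alone suffices there. One harmless slip: the zero set of a concave $D$ on one side of $0$ need not be an interval (e.g.\ $D(t)=t(c-t)$ on $[0,c]$), but your case analysis only uses the true fact that $\{D\ge 0\}$ is an interval.
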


\begin{proof}
Denote
\[
f(t)=|P_t|,\qquad   h(t)=\frac{1}{|P_t^{s(P_t)}|},
\qquad t\in[-a,a].
\]
By Theorem \ref{thm:shadow}, \(h\) is convex. By assumption, \(f\) is affine. Define
\[
D(t)=h(0)f(t)-f(0)h(t).
\]
Since \(h(t)>0\), we have
\[
\mathcal{P}(P_t)\le \cP(P_0)\quad 
\Longleftrightarrow \quad 
\frac{f(t)}{h(t)}\le \frac{f(0)}{h(0)}\quad 
\Longleftrightarrow \quad D(t)\le 0.
\]
Since \(D\) is concave and \(D(0)=0\), at least one of the two half-neighborhoods of \(0\) satisfies \(D\le 0\). Thus there exist a sign \(\sigma\in\{-1,1\}\)  such that
\[
D(\sigma t)\le 0,\qquad \forall\, t\in [0, c].
\] 
We may assume, after possibly replacing $t$ by $-t$ that, $D(t)\le 0,$ for all $t\in [0,c].$ Then, if
 $D(t_0)=0$ for some $t_0\in (0, c],$ by concavity and non-positivity of $D$, we must have $D=0$ on $[0, t_0].$ Then $\VP(P_t)= \VP(P_0)$ on $[0, t_0].$ 
The rigidity part of Theorem~\ref{thm:shadow}, applied to this interval, then forces the speed to be globally affine, which contradicts to the assumption that $P_t$ is a non-trivial shadow flow.
Therefore $D(t)<0$, equivalently $\cP(P_t) < \cP(P)$, for all $t\in [0, c].$
\end{proof}

\begin{proposition}\label{prop:strict-sublevel}
The set $\widetilde{\mathcal C}_{N,a}$ is connected for each $a\ge 64/9$.
\end{proposition}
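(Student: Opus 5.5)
We will show that every class in \(\widetilde{\mathcal C}_{N,a}\) can be joined, by a continuous path staying inside \(\widetilde{\mathcal C}_{N,a}\), to the class \([\Delta_3]\) of the tetrahedron. Since the inequality part shows \(\VP\ge 64/9\) on \(\widetilde{\mathcal C}_N\) with the value \(64/9\) attained at \([\Delta_3]\), and path-connectedness implies connectedness, this suffices.

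\emph{Step 1: a descending path from an arbitrary class.} Fix \([P]\in\widetilde{\mathcal C}_{N,a}\). Consider the set \(\mathcal R\) of classes reachable from \([P]\) by a continuous path along which \(\VP\) is non-increasing. Such a path stays in \(\widetilde{\mathcal C}_N\) and in \(\{\VP\le \VP(P)\}\subset\widetilde{\mathcal C}_{N,a}\). Let \(m=\inf_{\mathcal R}\VP\). Choose \([Q_k]\in\mathcal R\) with \(\VP(Q_k)\to m\) and a limit \([Q]\) (compactness of \(\widetilde{\mathcal C}_N\)). We will argue that \([Q]\) is itself reachable and that \(Q\) admits no non-trivial shadow flow.

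For the second claim we argue by contradiction. If \(Q\) admits a non-trivial shadow flow, Lemma \ref{lem:strict-d-flow} gives a one-sided short flow \(t\mapsto[Q_t]\), continuous in \(\rho_{BM}\) because vertices move continuously and the face lattice is preserved. Along it \(\VP\) strictly decreases and the number of vertices stays \(\le N\). For the polar side we use Lemma \ref{sliding lemma}: the path \(\tau\mapsto K^{z_\tau}\), starting at \(K^{c(K)}\), does not increase \(\VP\). Combined with a shadow flow of \(L=Q^{s(Q)}\) and the polarity step in the proof of Lemma \ref{lem:minimizer-exclusion}, this produces continuous descending moves through polars with at most \(N\) vertices (by Lemma \ref{lem:polarity-face-lattice}). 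So neither \(Q\) nor \(Q^{s(Q)}\) admits a non-trivial flow, and Lemma \ref{lem:combinatorial-alternative} gives that \(Q\) is a tetrahedron.

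\emph{Step 2: the main obstacle.} The hard part is reachability of the limit \([Q]\), and connecting a class \([P]\) to its polar while staying inside \(\widetilde{\mathcal C}_N\). Our plan is to avoid limits by a local argument near the minimal value. First we note that the set \(\mathcal R\) is closed under the moves of Step 1. Then we argue that its closure is open-and-closed in the sublevel set, using local path-connectedness near tetrahedra: in John position, polytopes close to \(\Delta_3\) can be joined to \(\Delta_3\) by straight-line vertex interpolation with a small change of \(\VP\).

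To handle the polar move, we connect \([L]\) to \([L^{s(L)}]\) by first moving the center from \(s\) to the centroid of the polar. By Lemma \ref{sliding lemma} and \eqref{centr-santalo} this keeps \(\VP\) below \(\VP(L)\), and all bodies \(L^{z}\) have the same face lattice, hence at most \(F(L)\) vertices. One must ensure \(F(L)\le N\) at the step where this move is used. This is guaranteed when \(L\) is the polar of a member of \(\mathcal C_N\), as in Lemma \ref{lem:minimizer-exclusion}.

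\emph{Step 3: conclusion.} Finally we conclude that every point of \(\widetilde{\mathcal C}_{N,a}\) lies in the same path component as \([\Delta_3]\), so \(\widetilde{\mathcal C}_{N,a}\) is connected.
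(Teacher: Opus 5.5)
\textbf{There is a genuine gap, and it is the one you flag in Step 2 but never close.} The contradiction in Step 1 (a non-trivial flow from $[Q]$ lowers $\VP$ below $m$) only works if $[Q]\in\mathcal R$, that is, if the infimum of $\VP$ over $\mathcal R$ is attained inside $\mathcal R$. Nothing guarantees this. Sets of points reachable by paths need not be closed, and the limit $[Q]$ of the $[Q_k]$ may not be joinable to $[P]$ at all.

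Your proposed remedy does not repair this. To show that $\overline{\mathcal R}$ is open in the sublevel set, you would need local (path-)connectedness of the sublevel set at every point of $\overline{\mathcal R}$. You only offer it near $[\Delta_3]$, and even there, for $a=64/9$, straight-line interpolation with a ``small change'' of $\VP$ can leave $\{\VP\le a\}$. Moreover, concluding that a clopen set containing $[P]$ is the whole sublevel set presupposes the connectedness you are trying to prove.

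A secondary problem is that $\mathcal R$, defined through \emph{non-increasing} paths, is not closed under the polar move:
\begin{itemize}
\item Lemma~\ref{sliding lemma} only gives $\VP(L^{z_\tau})\le \VP(L^{z_0})$; it says nothing about monotonicity in $\tau$.
\item Along $M_t=L_t^{s(L_t)}$ you only know $\VP(M_t)\le\VP(L_t)<\VP(L)$. This compares with $\VP(M_0)=\VP(L^{s(L)})$ only once you already know $\VP(L^{s(L)})=\VP(Q)$, and that equality itself comes from minimality over a set known to contain $[L^{s(L)}]$.
\end{itemize}

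\textbf{The missing idea is to work with connected components instead of reachable sets.} Let $\widetilde C$ be any component of the compact set $\widetilde{\mathcal C}_{N,a}$.
\begin{itemize}
\item Components are closed, so $\widetilde C$ is compact and $\VP$ attains its minimum on $\widetilde C$ at some $[Q]\in\widetilde C$.
\item Any path that starts in $\widetilde C$ and stays in $\widetilde{\mathcal C}_{N,a}$ lies entirely in $\widetilde C$. Only the bound $\VP\le a$ is needed, not monotonicity.
\end{itemize}
With this in hand the argument goes through:
\begin{itemize}
\item The one-sided flow of Lemma~\ref{lem:strict-d-flow} rules out non-trivial flows of $Q$.
\item Take $L=Q^{s(Q)}$ and $z_0=s(Q)=c(L)$. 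The sliding path $\tau\mapsto[L^{z_\tau}]$ brings $[L^{s(L)}]$ into $\widetilde C$. Minimality together with Lemma~\ref{lem:polar-nonincreasing} then gives $\VP(L^{s(L)})=\VP(L)=\VP(Q)$.
\item The path $[M_t]$ starting at $[L^{s(L)}]$ stays in $\widetilde C$ with $\VP(M_t)<\VP(Q)$. This rules out non-trivial flows of $L$.
\item Lemma~\ref{lem:combinatorial-alternative} now makes $Q$ a tetrahedron, so every component contains $[\Delta_3]$, and the sublevel set is connected.
\end{itemize}
Note that this yields connectedness only. Your stronger target of path-connectedness is neither needed for the proposition nor delivered by this argument.
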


\begin{proof}   
It suffices to show that, for every connected component $\widetilde C$ of $\widetilde{\mathcal C}_{N,a}$, one has $[\Delta_3]\in \widetilde C$. Since $\widetilde{\mathcal C}_{N,a}$ is compact in
$(\widetilde{\mathcal C}_N,\rho_{BM})$, the component $\widetilde C$ is also compact. Choose
\begin{equation}\label{min22}
  [Q]\in \widetilde C,\qquad {\rm such~that}\qquad
  \cP(Q)=\min_{[P]\in \widetilde C}\cP(P).
\end{equation}

\noindent{\it Claim.} Neither $Q$ nor its Santal\'o polar $L:=Q^{s(Q)}$ admits a non-trivial shadow flow. 
\vskip 2pt
Indeed, suppose first that $Q$ admits a
non-trivial shadow flow $(Q_t)$. 
By Lemma \ref{lem:facet-persistence}, there exists $c>0$ such that shadow flow $(Q_t)_{t\in[-c,c]}$
preserves the face lattice. 
It follows that $Q_t\in\mathcal C_N$, and $t\mapsto [Q_t]$ is a continuous path in $\widetilde{\mathcal C}_N$.  
By Lemma \ref{lem:strict-d-flow}, we may assume without loss of generality that $ \VP(Q_{t})<\VP(Q)$ for all $t\in (0,c]$. This is a contradiction, since
$Q_c \in \widetilde C $, but \[\VP(Q_c) < \VP(Q) = \min_{[P]\in \widetilde C}\cP(P). \]

It remains to exclude a non-trivial shadow flow of its Santal\'o polar $L=Q^{s(Q)}$. First, we connect $[Q]$ to $[L^{s(L)}]$ inside $\widetilde{\mathcal C}_{N,a}$. Note that $s(Q) = c(L)$.
Let
\[
  z_\tau=(1-\tau)s(Q)+\tau s(L),\qquad 0\le \tau\le 1.
\]
By Lemma~\ref{sliding lemma}, applied to $K=L$ and $z_0=s(Q)=c(L)$, we have
\[
  \cP(L^{z_\tau})
  \le
  \cP(L^{s(Q)})
  =
  \cP(Q),
  \qquad 0\le \tau\le 1 .
\]
Moreover, polarity reverses the face lattice, and hence $L^{z_\tau}\in\mathcal C_N$.
Thus $[L^{z_\tau}]$ is a path in $\widetilde{\mathcal C}_{N,a}$ starting from $[L^{z_0}] = [Q]$. In particular, $ [L^{s(L)}]\in \widetilde C.$ Since $[Q]$ minimizes $\VP$ on $\widetilde C$,  Lemma~\ref{lem:polar-nonincreasing} gives 
\begin{equation}\label{min33}
  \cP(L^{s(L)})=\cP(L)=\cP(Q).
\end{equation}

Now suppose that $L$ admits a non-trivial shadow flow $L_t$. By Lemma \ref{lem:facet-persistence}, there exists $c>0$ such that shadow flow $(L_t)_{t\in[-c,c]}$
preserves the face lattice of $L$.   
Hence, 
\[
  M_t:=L_t^{s(L_t)}\in\mathcal C_N.
\]
By Lemma \ref{lem:strict-d-flow}, we may assume without loss of generality that 
\[\VP(L_{t})<\VP(L),\qquad \forall t\in (0,c].\] 
By Lemma~\ref{lem:polar-nonincreasing} and \eqref{min33},
\[
  \cP(M_t) 
  \le
  \cP(L_t)
 < 
  \cP(L)
  =
  \cP(Q),
  \qquad 0 < t\le c .
\]
Therefore $[M_t]\in \widetilde{\mathcal C}_{N,a}$. This contradicts the assumption that $[Q]$ minimizes $\cP$ on $\widetilde C$. 

Thus neither $Q$ nor $Q^{s(Q)}$ admits a non-trivial shadow flow. By Lemma~\ref{lem:combinatorial-alternative}, $Q$ must be a tetrahedron. Hence $[\Delta_3]\in \widetilde C$. Since every connected component of $\widetilde{\mathcal C}_{N,a}$ contains the
same point $[\Delta_3]$, the set $\widetilde{\mathcal C}_{N,a}$ is connected.
\end{proof}

Here we invoke the following very useful fact  proved by Kim--Reisner in \cite{KimReisner2011} that the simplex is a strict local minimum for $\mathcal{P}$.

\begin{theorem}\cite[Theorem~1]{KimReisner2011}\label{Kim-Reisner stability}
   There exists $\delta(n)>0$ such that the following holds. Let $P$ be a simplex in $\mathbb{R}^n$ and $K$ a convex body in $\mathbb{R}^n$ with
$d_{BM}(K,P)=1+\delta$ for $0<\delta<\delta(n)$. Then
\[
\mathcal{P}(K) \geq \mathcal{P}(P) + C\delta,
\]
\textit{where $C=C(n)>0$.}
\end{theorem}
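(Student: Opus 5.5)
The plan is to prove the stated local estimate by a direct first-order perturbation analysis around the simplex. The guiding fact is that the simplex is ``maximally singular'': for a nearby body, a change of size $\delta$ near a facet of the simplex changes the volume by an amount of order $\delta$, while the corresponding change of the polar near the dual vertex is only of order $\delta^n$. By affine invariance of $\VP$ we fix $P=\Delta$, a simplex with centroid $o$, so that $\Delta^\circ$ is again a simplex with centroid $o$ and $o=s(\Delta)$.

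\textbf{Normalization.} Given $K$ with $d_{BM}(K,\Delta)=1+\delta$, we choose the affine position of $K$ to minimize the discrepancy with $\Delta$. Concretely, we apply an affine map so that the maximal-volume simplex inscribed in $K$ is $\Delta$. Then $\Delta\subset K\subset (1+C_1\delta)\Delta$, with the centroid and Santal\'o point of $K$ within $O(\delta)$ of $o$. The key consequence of this choice is a lower bound on the deviation: some point $p\in K$ lies at distance at least $c\,\delta$ beyond the supporting plane of some facet $F$ of $\Delta$. If this failed, $K$ would lie in $(1+o(\delta))\Delta$, contradicting $d_{BM}=1+\delta$. This step needs a small compactness argument, and it is one of the places where the dimensional constants $\delta(n)$ and $C(n)$ arise.

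\textbf{First-order gain for $K$.} Since $\conv(\Delta\cup\{p\})\subset K$, we get
\[
|K|\ge |\Delta|+\tfrac1n\,c\,\delta\,|F|.
\]
This is a gain that is linear in $\delta$.

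\textbf{Control of the polar.} For $z$ near $o$, the inclusion $K\subset(1+C_1\delta)\Delta$ shows that $K^z$ contains a slight dilate of $\Delta^z$. The point $p$ removes from $\Delta^z$ only a small cap near the vertex of $\Delta^z$ dual to $F$. That cap is cut by a hyperplane at distance $O(\delta)$ from the vertex, inside a cone of bounded aperture, so its volume is $O(\delta^n)$. Any further part of $K$ outside $\Delta$ removes caps near other dual vertices, again of size $O(\delta^n)$ each. Meanwhile the function $z\mapsto|\Delta^z|$ has a nondegenerate minimum at $o$, so moving $z$ away from $o$ only increases it, to second order. Combining these, we obtain
\[
|K|\,|K^{s(K)}|\ge\Bigl(|\Delta|+c'\delta\Bigr)\Bigl(|\Delta^\circ|-C_2\delta^n-C_3\delta^2\Bigr)\ge \VP(\Delta)+C\delta
\]
for $\delta<\delta(n)$. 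Here the $\delta^2$ term accounts for the dilation error, which is handled by a symmetric accounting: whatever volume $K$ loses near vertices of $\Delta$, the polar gains linearly near the dual facets.

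\textbf{Main obstacle.} The delicate step is the normalization combined with the bookkeeping of the two-sided deviations. Near some facets $K$ may bulge outward, while near some vertices $K$ may be truncated. One must make sure that the linear gains, either in $|K|$ from bulges or in $|K^\circ|$ from truncations, cannot cancel against each other. The intended way to rule this out is to exploit the asymmetry that every loss is of order $\delta^n$ (a vertex cap) while every gain is of order $\delta$ (a facet slab). Since the maximal-simplex normalization rules out the case where both kinds of deviation are $o(\delta)$, at least one linear gain survives and dominates all the losses.
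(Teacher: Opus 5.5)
The paper gives no proof of this statement; it quotes it from Kim--Reisner, whose argument (an extension of the Nazarov--Petrov--Ryabogin--Zvavitch method for the cube to the non-symmetric setting) is a delicate first-order comparison. Your normalization and volume-gain steps are fine. Your argument breaks at the polar estimate, exactly where you flag the main obstacle: it is not true that every loss in the polar is of order $\delta^n$. A protruding point $p$ cuts only a small cap off $\Delta^\circ$ near $v_F$ if $p$ stays a definite distance from the relative boundary of $F$. If $p$ is within $O(\delta)$ of a vertex $u$ of $\Delta$, the hyperplane $\{y\cdot p=1\}$ is nearly parallel to the facet of $\Delta^\circ$ dual to $u$, and it removes a slab of volume $\asymp\delta$.

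Concretely, in the plane normalize $u_i\cdot v_j=1$ for $i\neq j$ and $u_i\cdot v_i=-2$. Take $p=u_0+\delta(u_1-u_2)$ and $K=\conv(\Delta\cup\{p\})$. Then:
\begin{itemize}
\item $p$ has barycentric coordinates $(1,\delta,-\delta)$, so $\Delta$ is still a maximal-area inscribed triangle, and $\Delta\subset K\subset(1+C\delta)\Delta$.
\item $K$ is a triangle with a corner of relative size $\delta$ cut off, so $d_{BM}(K,\Delta)-1\asymp\delta$, and $|K|=(1+\delta)|\Delta|$.
\item $p\cdot v_2=1+3\delta$, $p\cdot v_1=1-3\delta$ and $p\cdot v_0=-2$. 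So the line $\{y\cdot p=1\}$ passes through the midpoint of $[v_1,v_2]$ and meets $[v_2,v_0]$ at relative distance $\delta/(1+\delta)$ from $v_2$.
\item Hence $|K^\circ|=\bigl(1-\frac{\delta}{2(1+\delta)}\bigr)|\Delta^\circ|$, a loss of the same order as the gain.
\end{itemize}
Your normalization cannot exclude this. The condition $\lambda_i\le 1$ on $K$ is equivalent to $-v_i/n\in K^\circ$, so it only pins $K^\circ$ at the centroids of the facets of $\Delta^\circ$ (the line above passes through such a centroid), and first-order slabs remain possible. Likewise, $K\subset(1+C_1\delta)\Delta$ only gives $|K^\circ|\ge(1+C_1\delta)^{-n}|\Delta^\circ|$, a first-order loss, not the $C_3\delta^2$ you wrote.

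So the theorem does not follow from an order-of-magnitude asymmetry between ``facet slabs'' and ``vertex caps''. Under $\Delta\subset K$, the gain in $|K|$ is a first-order functional of how $K$ protrudes over the facets of $\Delta$. The loss in $|K^\circ|$ is a first-order functional of $h_K-1$ on the facets of $\Delta^\circ$, i.e.\ of how $K$ bulges near the vertices of $\Delta$. These are coupled: their difference vanishes to first order along affine deformations such as dilations, which the normalization must exclude. Your accounting also credits the gain of only the single point $p$ while having to pay the losses of all protrusions. What the theorem asserts is that, after normalization, gain minus loss is at least $c\,\delta$ uniformly over all $K$ with $d_{BM}(K,\Delta)=1+\delta$. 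In the example the net is $+\delta/2$, consistent with the theorem, but your argument gives no mechanism forcing a positive sign in general, and that comparison is the whole content of the Kim--Reisner result. Your handling of the Santal\'o point is fine in spirit: $\nabla_z|K^z|$ at $z=o$ is $O(\delta)$ and the Hessian is uniformly positive definite, so replacing $o$ by $s(K)$ costs only $O(\delta^2)$.
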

Here, we use only the three-dimensional case, and denote $\delta_0=\delta(3)$ and $C_0=C(3)$.
The following proposition gives a uniform stability result  for polytopes whose volume product is close to the minimum.

\begin{proposition}\label{strict convergence}
Let $(P_j)_{j\ge1}$ be a sequence of three-dimensional convex polytopes such that
\begin{equation}\label{limit}
    \mathcal{P}(P_j)\longrightarrow \frac{64}{9},
\end{equation}
as $j\to \infty$. Then we have
\begin{equation}\label{approx1}
d_{\BM}(P_j,\Delta_3)\longrightarrow 1.
\end{equation}
\end{proposition}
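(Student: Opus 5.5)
We argue by contradiction and use the connectedness of the sublevel sets (Proposition~\ref{prop:strict-sublevel}) together with the Kim--Reisner local stability (Theorem~\ref{Kim-Reisner stability}). The point is to avoid any compactness argument in the full class of convex bodies. Such an argument would produce a limit body $K$ that need not be a polytope, and we cannot yet identify it, since the equality case is what we are ultimately after. Instead, for each $j$ we work inside $\widetilde{\mathcal C}_{N_j}$, where $N_j$ is the number of vertices of $P_j$. Nothing needs to be uniform in $N_j$, because the Kim--Reisner constants $\delta_0,C_0$ depend only on the dimension.

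Suppose \eqref{approx1} fails. Passing to a subsequence, there is $\varepsilon>0$ with $d_{\BM}(P_j,\Delta_3)\ge 1+\varepsilon$ for all $j$. Fix $\delta$ with $0<\delta<\min\{\varepsilon,\delta_0\}$. For each $j$ put $N_j=\max\{V(P_j),4\}$ and $a_j=\VP(P_j)$. By Theorem~\ref{thm:main}, $a_j\ge 64/9$, so Proposition~\ref{prop:strict-sublevel} applies and $\widetilde{\mathcal C}_{N_j,a_j}$ is connected. It contains both $[P_j]$ and $[\Delta_3]$.

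Next consider the function
\[
\phi:\widetilde{\mathcal C}_{N_j}\to[0,\infty),\qquad \phi([P])=\log d_{\BM}(P,\Delta_3)=\rho_{BM}([P],[\Delta_3]).
\]
It is well defined on affine classes. Since $\rho_{BM}$ is a metric, the triangle inequality gives $|\phi([P])-\phi([Q])|\le\rho_{BM}([P],[Q])$, so $\phi$ is $1$-Lipschitz and in particular continuous. The continuous image of a connected set is an interval. Hence $\phi(\widetilde{\mathcal C}_{N_j,a_j})$ is an interval containing $\phi([\Delta_3])=0$ and $\phi([P_j])\ge\log(1+\varepsilon)$. In particular it contains $\log(1+\delta)$. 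So there is $Q_j\in\mathcal C_{N_j}$ with
\[
d_{\BM}(Q_j,\Delta_3)=1+\delta,\qquad \VP(Q_j)\le a_j .
\]

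Since $0<\delta<\delta_0$, Theorem~\ref{Kim-Reisner stability} gives $\VP(Q_j)\ge 64/9+C_0\delta$. Therefore
\[
\VP(P_j)=a_j\ge \frac{64}{9}+C_0\delta\qquad\text{for all } j .
\]
This contradicts \eqref{limit}, which proves \eqref{approx1}.

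The step needing most care is justifying that a connected, rather than path-connected, sublevel set forces an intermediate value of $d_{\BM}$. This is handled by the continuity of $\phi$ with respect to $\rho_{BM}$, which follows from the triangle inequality for $\log d_{\BM}$. One should also check that each $P_j$ is full-dimensional, so that $[P_j]\in\widetilde{\mathcal C}_{N_j}$. This holds because $\VP(P_j)$ is finite and close to $64/9$, so $P_j$ must be a three-dimensional polytope, as the statement implicitly assumes.
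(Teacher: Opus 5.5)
Your proof is correct and follows the paper's argument essentially step for step. You argue by contradiction, use the connectedness of $\widetilde{\mathcal C}_{N_j,a_j}$ from Proposition~\ref{prop:strict-sublevel}, apply an intermediate-value step for the continuous function $\rho_{BM}(\cdot,[\Delta_3])$ to produce a class at distance exactly $1+\delta$, and invoke Theorem~\ref{Kim-Reisner stability} for the uniform gap $C_0\delta$. You also make explicit two points the paper leaves implicit: that $a_j\ge 64/9$ (required for Proposition~\ref{prop:strict-sublevel}) follows from Theorem~\ref{thm:main}, and that continuity of $\phi$ comes from the triangle inequality for $\rho_{BM}$.
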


\begin{proof}
Suppose, for contradiction, \eqref{approx1} fails. Then there exists $0<\rho<\delta_0$ such that for infinitely many $j$,
\begin{equation}\label{upnumber}
d_{\BM}(P_j,\Delta_3)>1+\rho.
\end{equation}
Passing to this subsequence and relabeling, we may assume that \eqref{upnumber} holds for all $j$. 
Let $N_j$ be the number of vertices of $P_j$ and set $a_j=\mathcal{P}(P_j)$. By Proposition \ref{prop:strict-sublevel}, $\widetilde{\mathcal C}_{N_j, a_j}$ is connected and
$P_j,\Delta_3\in \widetilde{\mathcal C}_{N_j,a_j}$. Combining this with \eqref{upnumber} and the continuity of $L\mapsto d_{\BM}(L,\Delta_3)$, there exists $L_j\in \widetilde{\mathcal C}_{N_j,a_j}$ such that $d_{\BM}(L_j,\Delta_3)=1+\rho$. By Theorem~\ref{Kim-Reisner stability}, we have
\begin{equation}\label{strict positive}
     \mathcal{P}(L_j)\ge \mathcal P({\Delta_3})+C_0\rho= \frac{64}{9}+C_0\rho.
\end{equation}
Then, by \eqref{strict positive} and the definition of $ \widetilde{\mathcal C}_{N_j,a_j}$, we have
\[
\mathcal{P}(P_j)=a_j\ge \mathcal{P}(L_j) \ge \frac{64}{9}+C_0 \rho.
\]
Letting $j\to\infty$, we obtain a  contradiction to \eqref{limit}. Therefore $d_{\BM}(P_j,\Delta_3)\to 1$.
\end{proof}

\begin{theorem}\label{eq-case-main}
Let $K\subset \R^3$ be a convex body. Then
\[
\cP(K)=\frac{64}{9}
\]
if and only if $K$ is a tetrahedron, i.e. $K\in [\Delta_3].$
\end{theorem}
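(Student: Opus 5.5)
The "if" direction is immediate: the volume product is affine invariant, and the Santal\'o point of a simplex is its centroid. Hence every tetrahedron has $\cP=\VP(\Delta_3)=64/9$. The real content is the converse, and the plan is to deduce it from Proposition~\ref{strict convergence} by polytopal approximation.

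Assume $\cP(K)=64/9$ and choose polytopes $P_j\to K$ in the Hausdorff metric. For instance, take convex hulls of finite $1/j$-nets of $K$. By Hausdorff continuity of the volume product, already used in the proof of Theorem~\ref{thm:main}, we have $\cP(P_j)\to\cP(K)=64/9$. Proposition~\ref{strict convergence} then gives $d_{\BM}(P_j,\Delta_3)\to1$.

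Next we pass this to $K$. The function $L\mapsto\log d_{\BM}(L,\Delta_3)$ is $1$-Lipschitz with respect to $\log d_{\BM}$ by the triangle inequality. Moreover, $d_{\BM}(P_j,K)\to1$: if $K$ is placed so that $rB^3+z\subset K$, then Hausdorff closeness $P_j\subset K+\varepsilon_jB^3$ and $K\subset P_j+\varepsilon_jB^3$ give $P_j\subset z+(1+\varepsilon_j/r)(K-z)$. A symmetric inclusion holds for $K$ with respect to an inscribed ball of $P_j$. Together these give $d_{\BM}(K,\Delta_3)=1$.

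The final step is to upgrade Banach--Mazur distance $1$ to affine equivalence. By definition there are affine maps $A_m,B_m$ and $c_m\downarrow1$ with $A_m(\Delta_3)\subset B_m(K)\subset c_mA_m(\Delta_3)$. Composing with $B_m^{-1}$, we may assume $B_m=\mathrm{id}$, so $A_m(\Delta_3)\subset K\subset c_mA_m(\Delta_3)$. The maps $A_m$ are uniformly bounded: $A_m(\Delta_3)\subset K$ bounds the linear parts and translations, and $|A_m(\Delta_3)|\ge c_m^{-3}|K|$ keeps the determinants away from $0$. So after passing to a subsequence, $A_m\to A$ with $A$ invertible. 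In the limit, $A(\Delta_3)\subset K\subset A(\Delta_3)$, so $K=A(\Delta_3)$ is a tetrahedron.

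I expect no serious obstacle, since the heavy lifting is done in Proposition~\ref{strict convergence}. The only point needing care is this compactness argument: the dilations $c_mA_m(\Delta_3)$ must be taken about a point that stays controlled, such as the centroid of $A_m(\Delta_3)$. This is harmless because, as the paper's convention $A(L)\subset B(K)\subset cA(L)$ requires, the origin can be fixed at the centroid before dilating.
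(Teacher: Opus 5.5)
Your proposal is correct and follows the paper's proof: approximate $K$ by polytopes, apply Proposition~\ref{strict convergence}, use $d_{\BM}(P_j,K)\to1$ and the triangle inequality. You also correctly write out the two steps the paper treats as standard, namely $d_{\BM}(P_j,K)\to1$ and $d_{\BM}(K,\Delta_3)=1\Rightarrow K\in[\Delta_3]$. One small correction: the homothety center cannot in general be moved to the centroid without changing the constant $c_m$, but you don't need this, since the center necessarily lies in $A_m(\Delta_3)\subset K$ and is therefore automatically bounded.
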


\begin{proof}
Suppose $\cP(K)=64/9.$ Choose a sequence $(P_j)_{j\ge1}$ of three-dimensional convex polytopes such that $P_j\to K$ in the Hausdorff metric as $j\to \infty$. By continuity of the volume product, we have
\[
\cP(P_j)\longrightarrow \cP(K)=\frac{64}{9},
\]
as $j\to \infty$. By Proposition \ref{strict convergence},
\[
d_{\BM}(P_j,\Delta_3)\longrightarrow 1.
\]
Moreover, Hausdorff convergence of convex bodies implies
\[
d_{\BM}(P_j,K)\longrightarrow 1.
\]
The triangle inequality for $d_{\BM}$ gives
\[
d_{\BM}(K,\Delta_3)\le d_{\BM}(K,P_j)\,d_{\BM}(P_j,\Delta_3)\longrightarrow 1.
\]
Since $d_{\BM}\ge 1$, we have $d_{\BM}(K,\Delta_3)=1$. Hence $K$ is a tetrahedron.

The converse follows from the affine invariance of $\mathcal P$ and the fact $\mathcal P(\Delta_3)=64/9.$
\end{proof}

The {\bf Main Result} of the current paper follows by combining Theorem \ref{thm:main} with Theorem \ref{eq-case-main}, and by recalling that the centroid of a simplex coincides with its Santal\'o point.

We end this section with a short remark. Although, for simplicity, we stated all results in this section in dimension three, Lemmas \ref{sliding lemma} and \ref{lem:strict-d-flow}  clearly also hold in $\mathbb R^n$.

\section{A shadow-flow  proof of the symmetric form}\label{sec:final}

In this section, we apply the shadow-flow method developed for the general form to the symmetric setting and give a new geometric proof of the following.

\begin{theorem}[Iriyeh--Shibata \cite{IS2020}]
\label{thm:main1}
Among all origin-symmetric convex bodies $K \subset \mathbb R^3$, that is, $K=-K$, one has
\[|K|\, |K^\circ| \ge \frac{32}{3}.\]
Equality holds if and only if $K$ is an affine image of a cube or an octahedron. 
\end{theorem}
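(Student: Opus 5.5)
We follow the scheme of Sections \ref{sec:lem:local-exclusion}--\ref{sec:main-theorem}, adapted to central symmetry. For a symmetric polytope the Santal\'o point is $o$, so $\VP(P)=|P|\,|P^\circ|$, and $P^\circ$ is again symmetric with $V(P^\circ)=F(P)$. Let $\mathcal C_N^{s}$ be the class of origin-symmetric polytopes in $\R^3$ with at most $N$ vertices. The argument of Lemma \ref{lem:bounded-class} still works: John position can be taken linear, and limits of symmetric polytopes are symmetric. Hence $\VP$ attains its minimum on $\mathcal C_N^s$ at some $Q$.

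\textbf{Step 1: symmetric shadow flows.} We restrict to \emph{odd} admissible speeds, meaning $\alpha_{i'}=-\alpha_i$ whenever $x_{i'}=-x_i$. Then $P_t$ stays origin-symmetric. Call such a speed trivial if it equals $(w\cdot x_i)_i$; the space of these has dimension $3$. Lemmas \ref{lem:facet-persistence}, \ref{lem:volume-affine} and \ref{lem:local-exclusion} go through unchanged. In the rigidity conclusion of Theorem \ref{thm:shadow}, symmetry of $P_t$ forces $\beta=0$. Running the proof of Lemma \ref{lem:minimizer-exclusion} in $\mathcal C_N^s$ then shows that neither $Q$ nor $Q^\circ$ admits a non-trivial odd shadow flow. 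Note that for $Q^\circ$, polarity is taken at $o$, which is automatically the Santal\'o point.

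\textbf{Step 2: symmetric counting (main obstacle).} Odd admissible speeds are determined by values on $V/2$ antipodal representatives. Facets also come in antipodal pairs, and the constraints from $G$ and $-G$ coincide for odd speeds. Choose $\theta$ parallel to a pair $\pm G_0$ of facets with $\Delta(P)$ vertices. This pair imposes no constraints, and the remaining pairs impose at most $\tfrac12\sum_{G\in\mathcal F_\theta}(m(G)-3)$ constraints. Arguing as in Lemma \ref{lem:dimension-count}, we get
\[
\dim A^{\rm odd}_\theta(P)\ \ge\ \tfrac V2-\tfrac12\bigl(2V-F-4-2(\Delta-3)\bigr)=\tfrac{F-V}{2}+\Delta-1 .
\]
Triviality means this dimension is $3$, which gives $\Delta(P)\le 4+\tfrac{V-F}{2}$. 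Applying the same bound to $P^\circ$ gives $d(P)\le 4+\tfrac{F-V}{2}$. Adding the two bounds yields $\Delta+d\le 8$, with $\Delta,d\ge3$. This leaves the cases $(\Delta,d)\in\{(3,3),(3,4),(4,3),(3,5),(5,3),(4,4)\}$, to be analysed together with Euler's formula $V-E+F=2$, the identities $2E\le \Delta F$ and $2E\le dV$, and the constraint $V,F$ even.
\begin{itemize}
\item $d=3$ with $\Delta\le 4$: we get $2E=3V$, and $F\le V$ follows from $\Delta\le4+\frac{V-F}2$ combined with the counts. This should force the cube ($V=8$, $F=6$).
\item $\Delta=3$, dually: this should force the octahedron.
\item $(4,4)$: this forces $V=F$, which with all facets quadrilaterals and all degrees $4$ contradicts Euler's formula ($E=2V=2F$ gives $V=F=E/2$ and $V-E+F=0$).
\item $(3,5)$ and $(5,3)$: these are excluded numerically. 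For example, simplicial with $\Delta=3$ gives $F=2V-4$, so $d\le 4+\tfrac{V-4}{2}$, and the inequality for $\Delta$ then requires $3\le 4+\tfrac{4-V}{2}$, i.e. $V\le 6$, which gives the octahedron.
\end{itemize}
We expect this step to be the delicate point. Care is needed because the bound uses only one facet pair; if a case survives, the plan is to refine it by choosing $\theta$ parallel to two facet pairs simultaneously, namely along the intersection of their directions. Finally, a combinatorial cube with all degrees $3$ and quadrilateral facets that is centrally symmetric must be a parallelepiped, since opposite facets are parallel. Likewise a symmetric octahedron is a linear image of the cross-polytope. Both give $\VP=32/3$.

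\textbf{Step 3: inequality and equality.} From Steps 1--2, the minimum over $\mathcal C_N^s$ is $32/3$ for every $N$, and approximation by symmetric polytopes gives the inequality for all symmetric bodies. For equality we mimic Propositions \ref{prop:strict-sublevel} and \ref{strict convergence}. Sublevel sets in $\mathcal C_N^s/GL(3)$ have every component containing $[\text{cube}]$ or $[\text{octahedron}]$, using Lemma \ref{lem:strict-d-flow} and the fact that $P\mapsto P^\circ$ preserves the sublevel set. Then, using the strict local minimality of the cube and the octahedron in Banach--Mazur distance \cite{NPRZ2010,Kim2014} in place of Theorem \ref{Kim-Reisner stability}, a near-minimizing sequence must converge in $d_{BM}$ to the cube or the octahedron. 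Hence equality holds exactly for their linear images.
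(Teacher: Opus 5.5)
Your Steps 1 and 3 follow the paper, and your bound $\dim A^{\rm odd}_\theta(P)\ge\frac{F-V}{2}+\Delta-1$ is the paper's estimate $\frac{F-V}{2}+2+C_\theta(P)$ with only the pair $\pm G_0$ counted in $C_\theta(P)$. The cases $\Delta=3$ (octahedron) and $d=3$ (cube, e.g.\ from $3V=2E\le 4F=2V+8$ and parity of $F$) are fine.

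The genuine gap is the case $(\Delta,d)=(4,4)$. You correctly get $V=F$ there, but then argue as if every facet were a quadrilateral and every vertex had degree $4$. In fact $\Delta$ and $d$ are only maxima. With $V=F$, Euler gives $E=2V-2$, hence $\sum_G(4-m(G))=\sum_v(4-\deg v)=4$. So there are exactly four triangular facets and four vertices of degree $3$, and there is no contradiction. The case is not vacuous. Let $T$ be a trapezoid that is not a parallelogram, lying in the plane $\{x_3=1\}$, and set $P=\conv(T\cup(-T))$. Then $P$ is origin-symmetric with $V=F=8$ and $E=14$: its facets are $\pm T$, two lateral quadrilaterals and four lateral triangles. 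Moreover $\Delta(P)=d(P)=4$, and $P^\circ$ has the same data. Both of your inequalities $\Delta\le 4+\frac{V-F}{2}$ and $d\le 4+\frac{F-V}{2}$ hold for $P$ and for $P^\circ$. Yet $P$ is neither a parallelepiped nor an octahedron, so Step 2 as written does not identify the terminal bodies.

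This is precisely the paper's Case 3 ($V=F$), and it needs the refinement you only mention as a fallback. The paper first shows that no two quadrilateral facets share an edge. Taking $\theta$ along a common edge makes $\pm G_1,\pm G_2$ all parallel to $\theta$, so $C_\theta(P)\ge 2$ and $\dim A^{\rm s}_\theta(P)\ge 4$. It then counts $p=4$ triangles and $4q\le 3p=12$ with $q$ even, so $V=F=q+4\le 6$. This is impossible, because a symmetric polytope with six vertices is an octahedron with eight facets.

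Your two-pair idea closes the case even more directly. Two quadrilateral facets $G_1,G_2$ with $G_2\neq\pm G_1$ are not parallel, so $\lin(G_1-G_1)\cap\lin(G_2-G_2)$ is a line. Choosing $\theta$ on it gives $\dim A^{\rm odd}_\theta(P)\ge\frac{F-V}{2}+2+2=4$. Hence there is at most one antipodal pair of quadrilaterals, so $F\le 6$, and the same contradiction follows. With this step replacing the false Euler contradiction, your argument goes through.

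A minor point in Step 3: $Q^\circ$ itself need not lie in $\mathcal C_N^{\rm s}$. As in Proposition~\ref{prop:strict-sublevel}, you should instead polarize the flowed bodies $L_t$ of $L=Q^\circ$, which satisfy $V(L_t^\circ)=F(L)=V(Q)\le N$.
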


\subsection{Notation and basic facts}

We collect some notation  and basic  facts in the symmetric setting, parallel to those developed in the general setting.

First, by the same proof of Lemma \ref{lem:bounded-class} we have
\begin{lemma}\label{lem:bounded-class-s}
Fix $N\geq 2n$, and let $\mathcal C_N^{\p \rm s}$ be the class of all origin-symmetric full-dimensional convex polytopes in $\R^n$ with at most $N$ vertices. Then $\VP$ attains its infimum on $\mathcal C_N^{\p \rm s}$.
\end{lemma}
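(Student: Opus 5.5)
We follow the proof of Lemma \ref{lem:bounded-class} with two changes: the normalization now has to keep symmetry, and the limit body has to be origin-symmetric. First we fix what $\VP$ means on $\mathcal C_N^{\p\rm s}$. For an origin-symmetric body $P$ the Santal\'o point is $s(P)=o$: the map $z\mapsto|P^z|$ is strictly convex and invariant under $z\mapsto -z$, so its unique minimizer is $o$. Hence $\VP(P)=|P|\,|P^\circ|$, which agrees with the symmetric form of the conjecture. Linear maps preserve origin-symmetry, and $\VP$ is invariant under them, so we may normalize within the linear group.

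Let $(P_m)\subset\mathcal C_N^{\p\rm s}$ be a minimizing sequence. For each $m$ we apply a linear map taking $P_m$ into John position. For a symmetric body the John ellipsoid is centered at the origin, so a linear map suffices, and the image stays symmetric. John's theorem in its symmetric form then gives
\[
B^n\subset P_m\subset \sqrt n\,B^n,
\]
although the weaker inclusion $P_m\subset nB^n$ is enough here. We list the vertices $P_m=\conv\{x_{m,1},\ldots,x_{m,N}\}$, repeating vertices if $P_m$ has fewer than $N$. Since each $P_m$ is symmetric, we may additionally pair the list so that $x_{m,i}$ and $-x_{m,i}$ both appear. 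By boundedness we pass to a subsequence along which $x_{m,i}\to x_i$ for every $i$.

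Set $P=\conv\{x_1,\ldots,x_N\}$. Then $P$ is the Hausdorff limit of the $P_m$. It is origin-symmetric, either because the pairing of vertices passes to the limit or because Hausdorff limits of symmetric bodies are symmetric. It contains $B^n$, so it is full-dimensional, and it has at most $N$ vertices. Therefore $P\in\mathcal C_N^{\p\rm s}$.

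Finally, Hausdorff continuity of $\VP$ \cite[Lemma 3]{FMZ2012} gives
\[
\VP(P)=\lim_{m\to\infty}\VP(P_m)=\inf_{\mathcal C_N^{\p\rm s}}\VP,
\]
so the infimum is attained at $P$. The only step that differs from the general case is checking that the normalization is linear, so that symmetry is preserved. This follows from the uniqueness of the John ellipsoid: since $-P_m=P_m$, the ellipsoid must be centered at $o$. The condition $N\ge 2n$ only guarantees that $\mathcal C_N^{\p\rm s}$ is nonempty, for example because it contains the cross-polytope.
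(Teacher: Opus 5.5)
Your proof is correct and is the argument the paper intends: the paper says the symmetric lemma follows by the same proof as Lemma~\ref{lem:bounded-class}, namely John-position normalization, a subsequence of the vertex lists, and Hausdorff continuity of $\VP$. The details you add are exactly the points the paper leaves implicit: the John ellipsoid of a symmetric body is centered at $o$, so the normalization can be taken linear and preserves symmetry, and origin-symmetry passes to the Hausdorff limit.
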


Let $P\subset \R^3$ be an origin-symmetric polytope, and let 
\[
        V=V(P),\qquad E=E(P),\qquad F=F(P)
\]
denote the numbers of vertices, edges, and facets. 

A vector
\[
\alpha=(\alpha_1,\ldots,\alpha_V)\in\mathbb R^V
\]
is called a \emph{symmetric  speed vector} if \(x_i=-x_j\) implies \(\alpha_i=-\alpha_j\). We say $\alpha \in \R^V$ is a  {\it  symmetric $\theta$-admissible speed}, if $\alpha$ is symmetric and belongs to $A_\theta(P)$. 
The linear space of all  symmetric $\theta$-admissible speeds is denoted by $A_\theta^{\rm s}(P)$. 
The space of {\it symmetric trivial speeds} is denoted by
\[\mathcal T^s(P) = \left \{(w\cdot x_1, \ldots , w\cdot x_V) \in \R^V : w\in\R^3 \right\}. \]
It is simple to see that $\mathcal T^s(P)\subset A_\theta^{\rm s}(P)$ and $\dim\mathcal T^s(P) = 3.$

Let
\(\theta\in \sn\) be a direction. A {\it symmetric shadow flow} along $\theta$ is a shadow system 
with a symmetric $\theta$-admissible speed
\(\alpha\in A_\theta^{\rm s}(P)\):
\[
        x_i(t)=x_i+t\alpha_i\theta,
        \qquad
        P_t=\conv\{x_i(t):1\leq i\leq V\}, \quad t\in [-c,c],
\]
where $c>0$. 

The persistence of the face lattice and the volume-affine property of shadow flows remain valid in the symmetric setting. Namely, Lemmas \ref{lem:facet-persistence} and \ref{lem:volume-affine} still hold for $P_t$.
We also note that Lemmas \ref{lem:local-exclusion} and \ref{lem:minimizer-exclusion} carry over to the symmetric form with the same proof. Consequently, we obtain the following symmetric analogue of Lemma \ref{lem:minimizer-exclusion}.

\begin{lemma} \label{lem:minimizer-exclusion-s}
Fix $N\geq6$, and let $Q\in\mathcal C_N^{\p \rm s}$ be a minimizer of $\VP$ over $\mathcal C_N^{\p \rm s}$. Then neither $Q$ nor   $Q^{\circ}$ admits a non-trivial symmetric shadow flow along any direction $\theta\in \sn$.
\end{lemma}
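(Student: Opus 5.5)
The argument follows the proof of Lemma~\ref{lem:minimizer-exclusion}, with two changes. Santal\'o polars are replaced by ordinary polars $Q^\circ$, and the class $\mathcal C_N$ is replaced by $\mathcal C_N^{\p\rm s}$. The first step is to record the symmetric versions of the tools. For an origin-symmetric body $K$ one has $s(K)=o$ by symmetry, since $|K^z|=|K^{-z}|$ and $z\mapsto|K^z|$ is strictly convex. Hence $\VP(K)=|K|\,|K^\circ|$, and Lemma~\ref{lem:polar-nonincreasing} becomes the identity $\VP(K^\circ)=\VP(K)$.

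Next I would check that a symmetric shadow flow stays in the symmetric class. If $x_i=-x_j$, then $\alpha_i=-\alpha_j$, so
\[x_i(t)=x_i+t\alpha_i\theta=-(x_j+t\alpha_j\theta)=-x_j(t).\]
Thus $P_t=-P_t$ for every $t$. Since a symmetric speed is in particular $\theta$-admissible, Proposition~\ref{prop:face-affine-shadow} applies. For small $|t|$ the flow is therefore non-degenerate and volume-affine, and it preserves the face lattice, so $V(P_t)=V(P)$ and $F(P_t)=F(P)$.

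\emph{Case $Q$.} Suppose $Q$ admits a non-trivial symmetric shadow flow $Q_t$. Then $Q_t\in\mathcal C_N^{\p\rm s}$ for small $|t|$, and minimality gives $\VP(Q_t)\ge\VP(Q_0)$. Lemma~\ref{lem:local-exclusion} then says the speed is globally affine: $\alpha_i=w\cdot x_i+\beta$. Symmetry gives
\[w\cdot x_i+\beta=\alpha_i=-\alpha_j=w\cdot x_i-\beta,\]
so $\beta=0$ and $\alpha\in\mathcal T^s(Q)$. This contradicts non-triviality.

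\emph{Case $L=Q^\circ$.} Suppose $L$ admits a non-trivial symmetric flow $L_t$. Each $L_t$ is symmetric, so its Santal\'o polar is $L_t^\circ$, which is symmetric. By Lemma~\ref{lem:polarity-face-lattice} and persistence of the face lattice,
\[V(L_t^\circ)=F(L_t)=F(L)=V(Q)\le N,\]
so $L_t^\circ\in\mathcal C_N^{\p\rm s}$. Combining this with $\VP(L_t^\circ)=\VP(L_t)$ and $\VP(Q^\circ)=\VP(Q)$ gives
\[\VP(L_0)=\VP(Q)\le\VP(L_t^\circ)=\VP(L_t).\]
So $L_0$ is an interior minimum along the flow. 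Lemma~\ref{lem:local-exclusion} forces the speed to be affine, and the symmetry argument above forces $\beta=0$, a contradiction.

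The main point to check is the reduction from a globally affine speed to a symmetric trivial one, $\beta=0$. This uses that every vertex $x_i$ has its antipode $-x_i$ among the vertices, which holds because $Q$ and $L$ are origin-symmetric. One must also confirm that the Meyer--Reisner theorem and Lemma~\ref{simplified} apply unchanged: a symmetric shadow flow is an ordinary non-degenerate shadow system, so no new input is required.
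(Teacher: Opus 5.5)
Your proposal is correct and takes the same route as the paper. The paper only asserts that the proofs of Lemmas~\ref{lem:local-exclusion} and \ref{lem:minimizer-exclusion} carry over unchanged to the symmetric class. You supply the details it leaves implicit: that $s(K)=o$, so Santal\'o polars become ordinary polars with $\VP(K^\circ)=\VP(K)$; that symmetric speeds keep $P_t=-P_t$; and that a symmetric globally affine speed has $\beta=0$ and hence lies in $\mathcal T^s$.
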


\subsection{Dimension estimate for symmetric admissible speeds} 

Let $G$ be a facet of $P$. Recall that we use $m(G)$ to denote the number of vertices of $G.$
Since $P$ is origin-symmetric, we have $m(G)=m(-G)$. For a direction $\theta\in\sn$, define
\begin{equation}\label{eq:Ctheta-definition}
        C_\theta(P)=
        \frac12 \cdot \sum_{G:\,\theta\in\lin(G-G)}(m(G)-3),
\end{equation}
where the sum is over all facets parallel to $\theta$. 

We shall derive the following dimension estimate for the linear space of symmetric admissible speeds, in terms of \(V(P)\), \(F(P)\), and \(C_\theta(P)\).

\begin{lemma}\label{lem:symmetric-dimension-count}
Let $P=-P\subset\R^3$ be an origin-symmetric polytope. Then
\begin{equation}\label{eq:dimension-estimate}
        \dim A_\theta^{\rm s}(P)
        \geq
        \frac{F(P)-V(P)}2+2+C_\theta(P).
\end{equation}
Consequently, if the right-hand side of \eqref{eq:dimension-estimate} is greater than $3$, then $P$ admits a non-trivial symmetric shadow flow.
\end{lemma}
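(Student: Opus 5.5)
The plan is to mirror the proof of Lemma~\ref{lem:dimension-count}, but to work on the quotient of the vertex set by the antipodal map $x\mapsto -x$. Since $P=-P$ and $o\in\intt P$, the vertices split into $V/2$ antipodal pairs $\{x_i,-x_i\}$, and the facets split into $F/2$ pairs $\{G,-G\}$ with $G\neq -G$. A symmetric speed vector is determined by its values on one representative of each vertex pair. So the space of symmetric speed vectors has dimension $V/2$, and $A_\theta^{\rm s}(P)$ is the kernel of a linear map on this $V/2$-dimensional space.

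The key observation is that the admissibility constraints on $G$ and $-G$ coincide for symmetric $\alpha$. First, $\lin(G-G)=\lin((-G)-(-G))$, so $G$ is parallel to $\theta$ if and only if $-G$ is. Second, if $\ell_G$ is an affine function on $\aff G$ interpolating $\alpha$ on the vertices of $G$, then $y\mapsto -\ell_G(-y)$ is affine on $\aff(-G)$. It interpolates $\alpha$ on the vertices of $-G$, because $\alpha_j=-\alpha_i$ whenever $x_j=-x_i$.

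Hence, exactly as in the proof of Lemma~\ref{lem:dimension-count}, each pair $\{G,-G\}$ with $\theta\notin\lin(G-G)$ imposes at most $m(G)-3$ independent linear conditions on the $V/2$ free parameters: injectivity of $\operatorname{ev}_G$ gives the codimension count, and we use one representative of the pair. Rank–nullity then gives
\[
\dim A^{\rm s}_\theta(P)\ \ge\ \frac V2-\frac12\sum_{G:\,\theta\notin\lin(G-G)}(m(G)-3).
\]

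Next, split the full facet sum into the facets not parallel to $\theta$ and those parallel to $\theta$, using identity \eqref{2e3fest}:
\[
\sum_{G}(m(G)-3)=2E-3F=2V-F-4.
\]
The sum over facets not parallel to $\theta$ equals $2V-F-4$ minus the sum over facets parallel to $\theta$, and the latter is $2C_\theta(P)$ by \eqref{eq:Ctheta-definition}. Substituting gives
\[
\dim A^{\rm s}_\theta(P)\ \ge\ \frac V2-\frac{2V-F-4}{2}+C_\theta(P)=\frac{F-V}{2}+2+C_\theta(P),
\]
which is \eqref{eq:dimension-estimate}.

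For the consequence: if the right-hand side exceeds $3$, then $\dim A^{\rm s}_\theta(P)>3=\dim\mathcal T^s(P)$. So there is a symmetric $\theta$-admissible speed outside $\mathcal T^s(P)$.

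The one point needing care is that such a speed is also non-trivial in the sense of Remark~\ref{rem:nontrivial}, i.e.\ not in $\mathcal T(P)$. Suppose $\alpha_i=w\cdot x_i+\beta$ is globally affine and symmetric. Pairing antipodal vertices gives $2\beta=0$, so $\beta=0$ and $\alpha\in\mathcal T^s(P)$. This contradicts the choice of $\alpha$, so $\alpha\notin\mathcal T(P)$. By Lemma~\ref{lem:facet-persistence}, the speed $\alpha$ generates a symmetric shadow flow for small $|t|$, which is non-trivial.

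The main obstacle I expect is justifying that the constraints from $G$ and $-G$ really coincide, so that each such pair is counted only once. This is exactly what produces the factor $\tfrac12$; without it the estimate would be off by that factor.
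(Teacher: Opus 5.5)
Your proposal is correct and follows essentially the same route as the paper. You reduce to $V/2$ free parameters via the antipodal pairing, count at most $m(G)-3$ conditions for each pair $\{G,-G\}$ of facets not parallel to $\theta$, and finish with Euler's formula. Along the way you make explicit two points the paper leaves implicit: the constraints from $G$ and $-G$ coincide for symmetric speeds, and a symmetric globally affine speed must have $\beta=0$, so it lies in $\mathcal T^s(P)$.
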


The proof follows from the same argument as the proof of Lemma \ref{lem:dimension-count}. For the reader's convenience, we sketch it here.
\begin{proof}
Since the speeds
\(\alpha\in\R^V\) are symmetric (in fact, they are odd vectors), their values are determined by choosing one
vertex from each opposite pair. Thus the number of independent vertex
variables is \(V/2\).

Let
\[
        \mathcal F_\theta
        =
        \{G\in \mathcal F:\theta\notin \lin(G-G)\}
\]
be the set of facets not parallel to \(\theta\). For each facet \(G\in \mathcal F_\theta\), the number of linear restrictions imposed by \(G\) is at most \(m(G)-3\), since an affine function on the plane containing \(G\) is determined by its values at three non-collinear vertices. If instead
\[
        \theta\in \lin(G-G),
\]
then \(G\) imposes no constraint by the definition of admissibility. Since opposite facets give the same restrictions after imposing the symmetry of the speed, we have
\[
        \frac V2 - \dim A_{\theta}^{\rm s}(P)
        \leq
        \frac12\sum_{G\in \F_\theta} \bigl(m(G)-3\bigr).
\]
It follows that
\[
\begin{aligned}
        \dim A_{\theta}^{\rm s}(P)
        &\geq
        \frac V2
        -\frac12\sum_{G\in \F_\theta} \bigl(m(G)-3\bigr)  \\
        &=
        \frac V2+\frac32F-\frac12\sum_{G\in \F}m(G)+C_\theta(P) \\
        &=
        \frac V2+\frac32F-E+C_\theta(P).
\end{aligned}
\]
By Euler's formula, the last expression is equal to
\[
        \frac{F-V}{2}+2+C_\theta(P).
\]
Hence \eqref{eq:dimension-estimate} follows.

Note that the trivial speed space
\[\mathcal T^s(P) =   \left \{(w\cdot x_1, \ldots , w\cdot x_V) \in \R^V : w\in\R^3 \right\} \]
is a three-dimensional subspace of \(A_\theta^{\rm s}(P)\). Hence, if the lower bound in \eqref{eq:dimension-estimate} is greater than \(3\), then
\(A_\theta^{\rm s}(P)\) contains a symmetric admissible speed that is not globally affine. Therefore it defines a non-trivial symmetric admissible shadow
system
\[
        P_t=\conv\{x_i+t\alpha_i\theta:1\leq i\leq V\}
\]
for sufficiently small \(|t|\).
\end{proof}

The following lemma implies that a three-dimensional origin-symmetric convex polytope with only trivial symmetric shadow flows must be a parallelepiped or an affine image of an octahedron.

\begin{lemma}\label{lem:combinatorial}
Let $P=-P\subset\R^3$ be an origin-symmetric polytope.  If neither $P$ nor $P^\circ$ admits a non-trivial symmetric shadow flow, then $P$ is a parallelepiped or an affine image of an octahedron.
\end{lemma}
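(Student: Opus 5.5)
The plan is to apply Lemma~\ref{lem:symmetric-dimension-count} to both $P$ and $P^\circ$, choosing $\theta$ cleverly, and then finish with a combinatorial case analysis. By Lemma~\ref{lem:polarity-face-lattice}, $V(P^\circ)=F(P)$ and $F(P^\circ)=V(P)$. A facet of $P^\circ$ dual to a vertex $v$ of $P$ has as many vertices as the degree of $v$, so $\Delta(P^\circ)=d(P)$.

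\emph{Step 1: the basic inequalities.} Since neither $P$ nor $P^\circ$ admits a non-trivial symmetric shadow flow, Lemma~\ref{lem:symmetric-dimension-count} gives, for every $\theta\in\sn$,
\[
C_\theta(P)\le 1+\tfrac{V-F}{2},\qquad C_\theta(P^\circ)\le 1+\tfrac{F-V}{2}.
\]
Taking $\theta$ parallel to a facet $G$ with $m(G)=\Delta(P)$, both $G$ and $-G$ are parallel to $\theta$. Hence $C_\theta(P)\ge \Delta(P)-3$, and dually $C_{\theta'}(P^\circ)\ge d(P)-3$. Adding the two inequalities gives $\Delta(P)+d(P)\le 8$. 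I would now split into three cases.

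\emph{Case $\Delta(P)=3$.} Here $P$ is simplicial, so $2E=3F$, and Euler's formula gives $F=2V-4$. With $C_\theta(P)\ge0$, the first inequality gives $F-V\le 2$, i.e.\ $V\le 6$. A symmetric $3$-polytope has at least $6$ vertices, so $V=6$. The vertices are then $\pm x_1,\pm x_2,\pm x_3$ with $x_1,x_2,x_3$ linearly independent (full-dimensionality). Thus $P=\conv\{\pm x_1,\pm x_2,\pm x_3\}$, which is an affine image of the octahedron.

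\emph{Case $d(P)=3$.} This is dual to the previous case: $P$ is simple and the same count gives $F=6$. Six facets of a symmetric polytope form three pairs of parallel planes $\{|x\cdot u_k|\le 1\}$ with $u_1,u_2,u_3$ independent, so $P$ is a parallelepiped. Alternatively, apply the previous case to $P^\circ$.

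\emph{Case $\Delta(P)\ge 4$ and $d(P)\ge4$.} Then $\Delta=d=4$. The first inequality with $C_\theta(P)\ge1$ gives $V\ge F$, and the dual inequality gives $F\ge V$, so $V=F$. Now I strengthen the choice of $\theta$. Suppose $P$ has two quadrilateral facets $G_1,G_2$ with $G_2\ne\pm G_1$. Their planes are not parallel, because in a symmetric polytope the only facet parallel to $G_1$ is $-G_1$. Take $\theta$ along the intersection direction $\lin(G_1-G_1)\cap\lin(G_2-G_2)$. Then the four facets $\pm G_1,\pm G_2$ are all parallel to $\theta$, so $C_\theta(P)\ge 2$. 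This contradicts $C_\theta(P)\le 1+\frac{V-F}{2}=1$.

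Hence $P$ has exactly one pair of quadrilateral facets and all other facets are triangles. Then $2E=8+3(F-2)$, while Euler's formula with $V=F$ gives $E=2F-2$. Solving yields $F=6$. But, as in the simple case, a symmetric polytope with six facets is a parallelepiped, all of whose facets are quadrilaterals, which is a contradiction. So this case cannot occur.

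The step I expect to need the most care is this last case. One must make sure the two-facet choice of $\theta$ is legitimate, namely that non-opposite facets of a symmetric polytope have non-parallel planes. One must also make sure the residual configuration, with a single quadrilateral pair, is genuinely excluded by the counting rather than slipping through. The two identifications used in the first two cases ($V=6$ gives the octahedron, $F=6$ gives the parallelepiped) are routine.
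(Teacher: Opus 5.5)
Your proof is correct. It uses the same ingredients as the paper: Lemma~\ref{lem:symmetric-dimension-count} applied to $P$ and $P^\circ$, with $\theta$ chosen parallel to large facets, followed by Euler counting. The case decomposition, however, is different.

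\textbf{The paper's route.} It first uses only $C_\theta\ge 0$ to get $|V-F|\le 2$, and then uses the parity of $V$ and $F$ to split into $V-F\in\{2,-2,0\}$.
\begin{itemize}
\item In the case $V=F+2$, it shows via the polar that every vertex has degree $3$, so $V=8$ and $F=6$.
\item In the case $V=F$, it excludes facets with five or more vertices, and quadrilaterals sharing an edge (taking $\theta$ along the common edge). It then counts $p=4$ triangles and $q=V-4$ quadrilaterals, and uses the adjacency bound $4q\le 3p$ together with parity to force $V=F=6$.
\end{itemize}

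\textbf{Your route.} You take $\theta$ parallel to a largest facet of $P$ and of $P^\circ$ to get $\Delta(P)+d(P)\le 8$, and then split into the simplicial, simple, and mixed cases.
\begin{itemize}
\item The simplicial case gives $V\le 6$ directly, and the simple case is its dual.
\item In the mixed case you get $V=F$ from both inequalities. You then use a stronger exclusion than the paper: any two non-opposite quadrilateral facets, adjacent or not, are ruled out. This leaves a single quadrilateral pair and yields $F=6$ immediately, with no adjacency counting and no parity argument.
\end{itemize}

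\textbf{On the point you flagged.} The non-parallelism of $G_1$ and $G_2$ is true, but you do not need it. Any two $2$-dimensional subspaces of $\R^3$ meet in at least a line, so a suitable $\theta$ always exists. All that matters is that $\pm G_1,\pm G_2$ are four distinct facets. This holds because $G_2\neq\pm G_1$ by assumption, and $G\neq -G$ for every facet, since the origin is interior.
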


\begin{proof}
Since $C_\theta(P)\geq 0$ and $P$ admits no non-trivial symmetric shadow flow, Lemma \ref{lem:symmetric-dimension-count} gives
\[
        \frac{F(P)-V(P)}2+2\leq3,
\]
and hence
\begin{equation*}\label{eq:F-minus-V}
        F(P)-V(P)\leq2.
\end{equation*}
Applying the same argument to $P^\circ$, whose numbers of vertices and facets are interchanged by polarity, gives
\begin{equation*}\label{eq:V-minus-F}
        V(P)-F(P)\leq2.
\end{equation*}
Thus
\begin{equation}\label{eq:abs-difference}
        |V(P)-F(P)|\leq2.
\end{equation}
Since $P$ is origin symmetric, $V(P)$ and $F(P)$ are even. Hence only the following three cases may occur.

\smallskip
\noindent\emph{Case 1: $V(P)=F(P)+2$.}
 Then we have
\[
        \frac{F(P^\circ)-V(P^\circ)}2
        =\frac{V(P)-F(P)}2=1.
\]
If some vertex $v\in \mathcal{V}(P)$ had degree $d(v)>3$, then the dual facet $G_v$ of $P^\circ$ would have $m(G_v)=d(v)>3$ vertices by Lemma \ref{lem:polarity-face-lattice}.  Choosing $\theta\in\lin(G_v-G_v)$, we would get
\[
        C_\theta(P^\circ)\geq d(v)-3\geq1.
\]
By Lemma \ref{lem:symmetric-dimension-count},
\[
        \dim A_\theta^{\rm s}(P^\circ)
        \geq1+2+1=4>3,
\]
contradicting the assumption on $P^\circ$.  Therefore every vertex of $P$ has degree $3$, which implies that
\[
        2E=3V.
\]
Using Euler's formula and $F=V-2$, we have that
 $V=8$ and $F=6$.

An origin-symmetric three-dimensional polytope with six facets has three opposite facet pairs, so it must be a parallelepiped after performing an affine transform.

\smallskip
\noindent\emph{Case 2: $F(P)=V(P)+2$.}
Then
\[
V(P^\circ)=F(P)=V(P)+2=F(P^\circ)+2.
\]
Moreover, the assumption is invariant under polarity: neither \(P^\circ\) nor \((P^\circ)^\circ=P\) admits a non-trivial symmetric shadow flow.
Thus Case 1 applied to \(P^\circ\) shows that \(P^\circ\) is a parallelepiped.
Consequently \(P\) is the polar of a parallelepiped, hence an affine image of the octahedron.

\smallskip
\noindent\emph{Case 3: $V(P)=F(P)$.}
Here Lemma \ref{lem:symmetric-dimension-count} becomes
\[
        \dim A_\theta^{\rm s}(P)\geq2+C_\theta(P).
\]
We claim that every facet is a triangle or a quadrilateral. Indeed, if $m(G)\geq 5$ for some facet $G$ of $P$, choosing $\theta\in\lin(G-G)$ would give $C_\theta(P)\geq2$, hence a non-trivial speed.

We also claim that no two quadrilateral facets share an edge. Indeed, if two quadrilateral facets shared an edge, then they would not be opposite facets; choosing \(\theta\) parallel to this edge would give  $C_\theta(P)\geq2$, again impossible.

Let $p$ be the number of triangular facets and $q$ the number of quadrilateral facets. Since $p+q=F=V$ and $3p+4q=2E,$ using Euler's formula we have that
\[
        p=4,
        \qquad
       q=V-4.
\]
Since no two quadrilateral facets share an edge, every edge of a quadrilateral is adjacent to a triangular facet. By counting the quadrilateral--triangle adjacency edges, we obtain
\[
        4q\leq3p=12.
\]
Since the number $q$ is even, we have that $q\leq2$ and therefore $V=q+4\leq6$. However, an origin-symmetric full-dimensional polytope in $\R^3$ has at least three opposite vertex pairs. Hence $V=6$ and $F=6.$ This cannot happen since an origin-symmetric three-dimensional polytope with six vertices must be an affine image of the octahedron and has eight facets, contradicting $F=6$. Thus Case 3 cannot occur.

In conclusion, the only remaining possibilities are the parallelepiped and the affine image of an octahedron.
\end{proof}

\subsection{New proof of Theorem \ref{thm:main1} }\label{sec:main-theorem-s}

\begin{proof}[Proof of the inequality]
We first prove the result for origin-symmetric polytopes. Fix \(N\geq 6\). By Lemma~\ref{lem:bounded-class-s}, there exists \(Q\in\mathcal C_N^{\p \rm s}\)
minimizing \(\VP\) on \(\mathcal C_N^{\p \rm s}\). By
Lemmas \ref{lem:minimizer-exclusion-s} and \ref{lem:combinatorial}, \(Q\) is a parallelepiped or an affine  image of the octahedron.

Let \(Q_3=[-1,1]^3\) be the cube. Since \(\VP\) is affine invariant and \(Q_3^\circ\) is the octahedron, a direct computation gives
\[
        \VP(Q_3)= \VP(Q_3^\circ)=\frac{32}{3}.
\]
Thus the minimum of \(\VP\) on \(\mathcal C_N^{\p \rm s}\) is \(32/3\). Since \(N\) is arbitrary, the lower bound holds for every origin-symmetric three-dimensional polytope.

Now let \(K=-K\subset\R^3\) be an arbitrary origin-symmetric convex body.
Choose a sequence \((P_m)_{m\geq 1}\) of origin-symmetric three-dimensional polytopes converging to \(K\) in the Hausdorff metric. By the polytope case and
the Hausdorff-continuity of the volume product,
\[
        \VP(K)=\lim_{m\to\infty}\VP(P_m)\geq\frac{32}{3}.
\]
\end{proof}

\begin{proof}[Characterization of the equality cases]
    The equality case was already characterized in \cite{IS2020}; see also \cite{FHMRZ} for another proof. We include here only a brief sketch of an alternative proof, since the argument is parallel to the equality argument in Section~\ref{sec:main6.2}; thus we only indicate the necessary modifications.

    The main difference is that, in the symmetric setting, there are two local-minimizer classes in three dimensions, namely $Q_3$ and $Q_3^\circ.$ Therefore, one cannot expect that the entire sublevel set $\widetilde{\mathcal C}_{N,a}^{s}=\{[P]\in \mathcal C^s_N/\Aff(3):\mathcal P(P)\le a\}$ is connected for \(N\ge 6\) and \(a\ge \frac{32}{3}\). However, by Lemma \ref{lem:combinatorial}, one can prove that each connected component contains a member of $\mathcal H:=\{[Q_3],[Q_3^\circ]\}.$ For an origin-symmetric convex body \(K\), write
\[
        d(K,\mathcal H)=\min\{d_{BM}(K,Q_3),d_{BM}(K,Q_3^\circ)\}.
\]
We  invoke the following local stability consequence of Kim's strict local minimality theorem for Hanner polytopes. For convenience, we state it only in dimension three.
\vskip 5pt
\noindent {\bf Theorem of Kim. \cite{Kim2014}} \label{hanner}
    {\it Let \(K\subset \mathbb R^3\) be an origin-symmetric convex body sufficiently close to one of the Hanner polytopes, in the sense that $\delta= d(K,\mathcal H)-1 $ is sufficiently small. Then
\[
\mathcal P(K) \geq \mathcal P(Q_3) +C\,\delta,
\]
where \(C>0\) is a constant depending only on the dimension.}
\vskip 5pt

By an argument similar to the proof of Proposition \ref{strict convergence}, we  obtain a uniform stability result for origin-symmetric  polytopes $P_j\subset \R^3$ whose volume product is close to the minimum: If $
\mathcal P(P_j)\rightarrow \frac{32}{3},$ then we have $d(P_j,\mathcal H)\rightarrow 1$ as $j\rightarrow \infty.$

Finally, let  $K \subset\mathbb R^3$  be an origin-symmetric convex body with $\mathcal P(K)=32/3.$ By the same argument as in the proof of Theorem~\ref{eq-case-main}, we obtain $d(K,\mathcal H)=1$. Thus $K$ is an affine image of either $Q_3$ or $Q_3^\circ$.
\end{proof}

\section*{Acknowledgements}

The authors would like to thank Prof. Shlomo Reisner for valuable comments and suggestions on an earlier version of this paper.

\end{document}